\documentclass[12pt]{article}
\usepackage{graphicx}
\usepackage[ruled,vlined]{algorithm2e}
\usepackage{appendix}
\usepackage{amsmath}
\usepackage{amssymb}
\usepackage{amsthm}
\usepackage{multirow}
\usepackage{color}
\usepackage{longtable}
\usepackage{array}
\usepackage{url}
\usepackage{booktabs}
\usepackage{float}
\usepackage{mathtools}
\usepackage{tikz}
\usepackage{caption}

\oddsidemargin  0pt \evensidemargin 0pt \marginparwidth 40pt
\marginparsep 10pt \topmargin -10pt \headsep 10pt \textheight
8.7in \textwidth 6.7in \textheight 7.8375in

\newtheorem{theorem}{Theorem}[section]
\newtheorem{definition}[theorem]{Definition}
\newtheorem{lemma}[theorem]{Lemma}

\newtheorem{construction}[theorem]{Construction}
\newtheorem{corollary}[theorem]{Corollary}

\newtheorem{problem}[theorem]{Problem}
\newtheorem{conjecture}[theorem]{Conjecture}

\theoremstyle{definition}
\newtheorem{remark}[theorem]{Remark}



\newcommand{\turan}{Tur\'{a}n }

\title{On the Tur\'{a}n number of 1-subdivision of $K_{3,t}$}
\author{Tao Zhang$^{\text{a,}}$\thanks{Research supported by the National Natural Science Foundation of China under Grant No. 11801109.},~ Zixiang Xu$^{\text{b}}$~ and  Gennian Ge$^{\text{b,}}$\thanks{Corresponding author (e-mail: gnge@zju.edu.cn). Research supported by the National Natural Science Foundation of China under Grant Nos. 11431003, 61571310 and 11971325, Beijing Scholars
Program, Beijing Hundreds of Leading Talents Training Project of Science and Technology, and Beijing Municipal Natural Science Foundation.}\\
\footnotesize $^{\text{a}}$ School of Mathematics and Information Science, Guangzhou University, Guangzhou 510006, China.\\
\footnotesize $^{\text{b}}$ School of Mathematics Sciences, Capital Normal University, Beijing 100048, China.\\
}

\begin{document}

\date{}

\maketitle

\begin{abstract}
For a graph $H$, the 1-subdivision of $H$, denoted by $H'$, is the graph obtained by replacing the edges of $H$ by internally disjoint paths of length 2. Recently, Conlon, Janzer and Lee (arXiv: 1903.10631) asked the following question: For any integer $s\ge2$, estimate the smallest $t$ such that $\textup{ex}(n,K_{s,t}')=\Omega(n^{\frac{3}{2}-\frac{1}{2s}})$. In this paper, we consider the case $s=3$. More precisely, we provide an explicit construction giving
\begin{align*}
\text{ex}(n,K_{3,30}')=\Omega(n^{\frac{4}{3}}),
\end{align*}
which reduces the estimation for the smallest value of $t$ from a magnitude of $10^{56}$ to the number $30$. The construction is algebraic, which is based on some equations over finite fields.
\medskip

\noindent {{\it Key words and phrases\/}: Tur\'{a}n number, algebraic construction, subdivision.}

\smallskip

\noindent {{\it AMS subject classifications\/}: 05C35.}
\end{abstract}

\section{Introduction}
Given a graph $H$, the Tur\'{a}n number $\text{ex}(n,H)$ is the maximum number of edges in an $n$-vertex graph that does not contain $H$ as a subgraph. The estimation of $\text{ex}(n,H)$ for various graphs $H$ is one of the most important problems in extremal graph theory. For general graph $H,$ the famous Erd\H{o}s-Stone-Simonovits Theorem \cite{1946ErodsBAMS} gives that
\begin{align*}
\text{ex}(n,H)=(1-\frac{1}{\chi(H)-1}+o(1))\binom{n}{2},
\end{align*}
where $\chi(H)$ is the chromatic number of $H$. This theorem asymptotically solves the problem when $\chi(H)\ge3$. However, for bipartite graphs $H$, it only gives $\text{ex}(n,H)=o(n^{2})$. So far, there are only a few bipartite graphs for which the asymptotics of whose Tur\'{a}n numbers are known. A well-known theorem of K\"{o}vari, S\'{o}s and \turan \cite{Kovari1954} showed that $\text{ex}(n,K_{s,t})=O(n^{2-1/s})$ for any integers $t\ge s$. When $s=2,3,$ matched lower bounds were found in \cite{Brown1966, Erdos1966}. For general values of $s$, we have $\text{ex}(n,K_{s,t})=\Omega(n^{2-\frac{1}{s}})$ when $t\geqslant (s-1)!+1$ \cite{ARS99, KRS96}. Recently, using the random algebraic method, Bukh \cite{Bukh2015RAC} gave a new construction of $K_{s,t}$-free graphs which also yields $\text{ex}(n,K_{s,t})=\Omega(n^{2-\frac{1}{s}}),$ where $t$ is sufficiently large.

The $1$-subdivision of a graph $H$, denoted by $H'$, is the graph obtained by replacing the edges of $H$ by internally disjoint paths of length $2$.
 Recently, the $1$-subdivision of graphs received a lot of attention due to a paper of Kang, Kim and Liu \cite{KKL18}, where they made the following conjecture about the $1$-subdivision of a general bipartite graph.
\begin{conjecture}[\cite{KKL18}]\label{conj:subdivision}
Let $H$ be a bipartite graph. If $\textup{ex}(n,H)=O(n^{1+\alpha})$ for some $\alpha>0$, then $\textup{ex}(n,H')=O(n^{1+\frac{\alpha}{2}})$.
\end{conjecture}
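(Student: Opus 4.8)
The plan is to prove the contrapositive in quantitative form: there is a constant $C=C(H)$ such that every $n$-vertex graph $G$ with $e(G)\ge Cn^{1+\alpha/2}$ contains a copy of $H'$. Write $a,b$ for the sizes of the two colour classes of $H$. First perform the standard cleaning: while some vertex has degree below $\frac{C}{2}n^{\alpha/2}$, delete it; at most $\frac{C}{2}n\cdot n^{\alpha/2}$ edges are removed in total, so the resulting subgraph --- which I again call $G$ --- has minimum degree at least $\frac{C}{2}n^{\alpha/2}$, at least $\frac{C}{2}n^{1+\alpha/2}$ edges, and at most $n$ vertices. Now introduce the auxiliary \emph{codegree graph} $F$ on $V(G)$, with $xy\in E(F)$ whenever $x$ and $y$ have a common neighbour in $G$. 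The strategy is to show that $F$ contains many copies of $H$ and then to promote one of them to a copy of $H'$ in $G$, using for each edge $xy$ of the copy a distinct common $G$-neighbour of $x$ and $y$ as its subdivision vertex.

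For the density of $F$, count cherries: $\sum_{w\in V(G)}\binom{d_G(w)}{2}\ge e(G)^2/\abs{V(G)}\ge\frac{C^2}{4}n^{1+\alpha}$, and this equals $\sum_{\{x,y\}}\abs{N_G(x)\cap N_G(y)}$. Were the codegrees of $G$ bounded by an absolute constant, then $e(F)$, the number of pairs contributing a nonzero term, would be $\Omega(n^{1+\alpha})$, which for $C$ large relative to the implied constant in $\textup{ex}(m,H)=O(m^{1+\alpha})$ exceeds $\textup{ex}(\abs{V(F)},H)$, so $F$ would contain a copy of $H$ --- and a supersaturation strengthening of the hypothesis would yield many. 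From a copy of $H$ in $F$, taking the images of the colour classes as branch vertices, one tries to build $H'$ greedily: process the $e(H)$ edges of the copy in turn and assign to $xy$ a common $G$-neighbour avoiding the at most $a+b+e(H)$ vertices already used; with many copies available, a random choice should succeed with positive probability.

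I expect the crux to be that neither idealization above holds for arbitrary $G$. On one hand, a graph with only $\Theta(n^{1+\alpha/2})$ edges may have pairs of codegree as large as $\Theta(n)$, in which case the cherry mass $\sum_{\{x,y\}}\abs{N_G(x)\cap N_G(y)}$ can concentrate on a handful of pairs and $e(F)$ collapses far below $n^{1+\alpha}$; the standard fix --- bucket the pairs dyadically by codegree, pass to the heaviest bucket, and apply the extremal bound there --- costs a polylogarithmic factor that a bare upper bound on $\textup{ex}(n,H)$ cannot absorb. On the other hand the host may be $C_4$-free, so every codegree equals $1$ and from a typical copy of $H$ in $F$ the subdivision vertices are forced and may collide, which cannot be excluded without a robust count of copies of $H$ rather than mere existence. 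The clean way to handle both is to drop the auxiliary graph and estimate directly the number of homomorphic copies of $H'$ in $G$, namely $\sum\prod_{uv\in E(H)}\abs{N_G(x_u)\cap N_G(x_v)}$ over all placements of the branch vertices $(x_u)$; one lower bounds this --- which seems to want Sidorenko-type control of $H$, or a reduction to a pseudorandom-like subgraph --- and then shows the non-injective homomorphisms are negligibly few. Making this degeneracy estimate work against a hypothesis that supplies only an upper bound on $\textup{ex}(n,H)$ is the step I expect to be hardest, and presumably where the difficulty of the conjecture really lies.
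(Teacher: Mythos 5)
You are attempting to prove a statement that the paper does not prove and does not claim to prove: it is stated as Conjecture~1.1, attributed to Kang, Kim and Liu, and it is an open problem. The paper only records that Conlon, Janzer and Lee verified it in the special case $H=K_{s,t}$ (by a rather different argument, a careful count of ``light'' homomorphic subdivisions that exploits the structure of complete bipartite graphs), and then works on the complementary lower-bound question. So there is no proof in the paper to compare yours against, and a correct submission here would be a proof of a conjecture that is, to date, unresolved for general bipartite $H$.

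Judged on its own terms, your writeup is not a proof but an annotated sketch of why the natural attack fails, and your own diagnosis of the gaps is accurate. Concretely: (i) the passage from the cherry count $\sum_w\binom{d_G(w)}{2}=\Omega(n^{1+\alpha})$ to $e(F)=\Omega(n^{1+\alpha})$ requires a codegree upper bound you do not have, and the dyadic-bucketing repair loses factors that the bare hypothesis $\textup{ex}(n,H)=O(n^{1+\alpha})$ cannot absorb (an upper bound on the \turan number gives existence of one copy of $H$ above the threshold, not the supersaturation you invoke); (ii) even granting a copy of $H$ in $F$, the greedy assignment of distinct subdivision vertices needs each relevant pair to have codegree exceeding $a+b+e(H)$, which fails exactly in the $C_4$-free regime you mention; and (iii) the ``clean way'' of your final paragraph --- lower-bounding $\sum\prod_{uv\in E(H)}\abs{N_G(x_u)\cap N_G(x_v)}$ and controlling degenerate terms --- is precisely the open problem restated, since Sidorenko-type control of $H$ inside an arbitrary near-extremal host is not available from the hypothesis. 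No step of the argument as written produces even one copy of $H'$, so the conclusion is never reached.
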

Apart from being interesting on its own, somewhat surprisingly, Kang, Kim and Liu \cite{KKL18} showed that this seemingly unrelated conjecture implies the rational exponent conjecture as follows.
\begin{conjecture}[\cite{1988Erdos}]\label{conj:rational}
  For every rational number $r\in [1,2],$ there exists a graph $F$ with $\textup{ex}(n,F)=\Theta(n^{r}).$
\end{conjecture}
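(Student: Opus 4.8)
The plan is to prove Conjecture~\ref{conj:rational} by realizing each rational exponent through an explicit random-algebraic construction, pairing Bukh's random polynomial method for the lower bound with a balanced-tree counting argument for the upper bound. First I would dispose of the endpoints of $[1,2]$: the value $r=1$ is witnessed by any path $P_k$ (for which $\textup{ex}(n,P_k)=\Theta(n)$), and $r=2$ by any graph of chromatic number at least $3$ via the \erdos--Stone--Simonovits theorem. So fix a rational $r\in(1,2)$ and set $\rho:=2-r\in(0,1)$, written $\rho=p/q$ in lowest terms; the goal is a single graph $F$ (passing, if necessary, through a finite family $\mathcal{F}$) with $\textup{ex}(n,F)=\Theta(n^{r})$.

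The heart of the matter is a combinatorial gadget whose prescribed rational density pins the exponent to $r$. I would use a \emph{balanced rooted tree}: a tree $T$ with an independent set of roots $R$ whose root-density $\rho_R(T):=(v(T)-|R|)/e(T)$ equals $\rho$, and which is balanced in the sense that every subtree $T'$ containing $R$ is no denser, i.e.\ $\rho_R(T')\ge\rho_R(T)$. A necessary preliminary lemma is that such balanced rooted trees realize every rational $\rho\in(0,1)$: the stars $K_{1,m}$ (rooted at their leaves) give the densities $1/m$, and suitably weighted spiders and brooms interpolate to arbitrary $p/q$, with balance checked by a local exchange argument. From $T$ one forms the forbidden graph $F$ by gluing many copies of $T$ along the common root set $R$, arranged so that the densest rooted subgraph of $F$ is exactly $T$; for $T$ a star this recovers $K_{m,t}$ and the classical exponent $2-1/m$.

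For the lower bound $\textup{ex}(n,\mathcal{F})=\Omega(n^{r})$ I would invoke the random algebraic method. Index the vertices by $\mathbb{F}_q^{d}$, so $n\approx q^{d}$, and join $x$ to $y$ precisely when a random system of $\rho d$ bounded-degree polynomials vanishes at $(x,y)$; then the typical degree is $\Theta(n^{1-\rho})$ and the edge count is $\Theta(n^{2-\rho})=\Theta(n^{r})$. Using the Lang--Weil estimates one bounds the expected number of copies of each member of $\mathcal{F}$: because these graphs are denser than a generic slice of the defining variety can support, the expected count is $o(n)$, so deleting one vertex per copy destroys them all while discarding only $o(n^{r})$ edges.

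The upper bound $\textup{ex}(n,\mathcal{F})=O(n^{r})$ is where balance is used: in any graph with $\gg n^{r}$ edges, a convexity (dependent-random-choice) computation makes the number of homomorphic images of $T$ rooted on a random $|R|$-set superlinear, and the balance inequality guarantees that this surplus cannot be soaked up by degenerate, non-injective images, which forces an honest copy of some $F\in\mathcal{F}$. The main obstacle is twofold. Technically, this last argument must survive every way a rooted tree can fold onto itself, and arranging $\rho_R(T)=p/q$ exactly while keeping $\rho_R(T')\ge\rho_R(T)$ for \emph{all} subtrees $T'$ is the delicate combinatorial core. Conceptually, the construction naturally yields only a finite family $\mathcal{F}$, whereas Conjecture~\ref{conj:rational} demands a single $F$; upgrading requires exhibiting one saturated member carrying the full extremal number, and this is exactly where explicit algebraic constructions of the kind developed here for $K_{3,t}'$ should be decisive.
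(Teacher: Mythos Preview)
The statement you are attempting to prove is Conjecture~\ref{conj:rational}, which the paper records as an \emph{open conjecture}; the paper offers no proof of it, and indeed it remains unproved in general. So there is no ``paper's own proof'' to compare against.

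Your proposal is essentially a sketch of the Bukh--Conlon programme (balanced rooted trees plus the random algebraic method), and you yourself identify the genuine gap: that machinery yields, for each rational $r\in(1,2)$, a \emph{finite family} $\mathcal{F}$ with $\textup{ex}(n,\mathcal{F})=\Theta(n^{r})$, not a single graph $F$. The sentence ``upgrading requires exhibiting one saturated member carrying the full extremal number'' is precisely the unresolved step, and nothing in your outline supplies it. The suggestion that explicit algebraic constructions ``of the kind developed here for $K_{3,t}'$'' will be decisive is hope, not argument: such constructions handle isolated exponents (here $4/3$) one at a time and give no mechanism for an arbitrary $p/q$. Partial progress toward single-graph exponents exists (see \cite{KKL18,JML2018,JQ19}), but the full conjecture is open, and your proposal does not close it.

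There is also a smaller technical slip: setting the number of defining polynomials to ``$\rho d$'' tacitly assumes $\rho d$ is an integer, which forces a specific choice of $d$ tied to the denominator $q$; this is handled in the literature but should be said explicitly rather than glossed over.
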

For more information on the recent active study of the Tur\'{a}n problem for subdivisions, we refer the readers to \cite{CJL19, CL18, 2019EJCJanzer, Janzer19, JML2018, JQ19} and the references therein.

In this paper, we focus on the 1-subdivision of complete bipartite graphs. In \cite{CJL19}, Conlon, Janzer and Lee showed that $\text{ex}(n,K_{s,t}')=O(n^{\frac{3}{2}-\frac{1}{2s}})$ for $2\le s\le t$, which proved Conjecture~\ref{conj:subdivision} for complete bipartite graphs. Combining the random algebraic construction in \cite{Bukh2018}, they also showed the upper bound is tight when $t$ is sufficiently large compared to $s$. Since the random algebraic method requires the parameter $t$ to be very large, in the same paper, they asked the following problem:
\begin{problem}
For any integer $s\ge2$, estimate the smallest $t$ such that $\textup{ex}(n,K_{s,t}')=\Omega(n^{\frac{3}{2}-\frac{1}{2s}})$.
\end{problem}
The previously known smallest value of $t$ from random algebraic method in \cite{Bukh2018} is $s^{O(s^{2})}.$ In particular, when $s=3,$ their construction showed that $t\approx 10^{56}.$ Moreover, the random algebraic method falls well short of this due to Lang-Weil bound \cite{LangWeil1954}. The case $s=2$ amounts to estimating the extremal number of the theta graph $\theta_{4,t}$. Very recently, Verstra\"{e}te and Williford \cite{VW19} gave an algebraic construction which yields $\text{ex}(n,\theta_{4,3})=\Omega(n^{\frac{5}{4}})$.  However deriving a similar bound for $\text{ex}(n, \theta_{4,2})$ is likely to be very difficult, as it would solve the famous open problem of estimating $\text{ex}(n, C_{8})$. In this paper, we consider the next case $s=3$, and prove the following result.
\begin{theorem}\label{thm}
$\textup{ex}(n,K_{3,30}')=\Omega(n^{\frac{4}{3}})$.
\end{theorem}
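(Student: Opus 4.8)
The goal is an explicit graph on $\Theta(n)$ vertices with $\Omega(n^{4/3})$ edges containing no copy of $K_{3,30}'$. My plan is to build a bipartite (or tripartite) incidence structure over a finite field $\mathbb{F}_q$, in the spirit of the Wenger/Lazebnik-type algebraic constructions and the Verstra\"ete-Williford construction for $\theta_{4,3}$. Concretely, I would take vertex classes indexed by points and "lines" in $\mathbb{F}_q^{3}$ (or $\mathbb{F}_q^{2}$ together with an auxiliary coordinate), so that $n \asymp q^{3}$, and join a point $x=(x_1,x_2,x_3)$ to a line $y=(y_1,y_2,y_3)$ precisely when a small system of polynomial equations, say $x_2+y_2=f(x_1,y_1)$ and $x_3+y_3=g(x_1,y_1)$ for suitable low-degree $f,g$, holds. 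The degree of each vertex is then $\asymp q$, giving $\asymp q^{4}=q^{3}\cdot q \asymp n^{4/3}$ edges, the right order of magnitude.

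The heart of the argument is the $K_{3,30}'$-freeness. A copy of $K_{3,30}'$ consists of three "branch" vertices $u_1,u_2,u_3$, thirty "branch" vertices $v_1,\dots,v_{30}$, and $90$ internal vertices $w_{ij}$, where $w_{ij}$ is adjacent to $u_i$ and $v_j$. The key structural observation I would exploit is that in such a construction each pair of vertices on the same side typically has only $O(1)$ common neighbors (a $K_{s,t}$-type bound coming from the algebraic equations), so the combinatorial core of a $K_{3,30}'$ forces many vertices to lie on a common low-dimensional algebraic variety; the construction must be designed so that such varieties are too small to accommodate all $30$ of the $v_j$'s together with their required internal vertices. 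The plan is: (i) show that for each $i$, the vertex $u_i$ together with the set of internal vertices $\{w_{ij}\}_j$ lies in a "star" configuration that, through the defining equations, pins $\{w_{ij}\}_j$ to a curve $\Gamma_{u_i}$ of bounded degree; (ii) show each $v_j$ must then be adjacent to one point on each of $\Gamma_{u_1},\Gamma_{u_2},\Gamma_{u_3}$, which translates (by eliminating the $w$-coordinates) into a polynomial condition on $v_j$ of bounded degree, cutting out a variety $Z$ of dimension $\le 1$ and degree $O(1)$ depending only on $u_1,u_2,u_3$; (iii) bound the number of field-rational points of $Z$, together with the multiplicity with which the internal vertices can repeat, to show fewer than $30$ such $v_j$ can coexist. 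Steps (i)–(iii) reduce the problem to a finite, checkable incidence/elimination computation over $\mathbb{F}_q$, and the number $30$ should emerge as (a crude bound on) the number of solutions of the resulting system.

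The main obstacle — and the reason the constant is $30$ rather than something smaller — is controlling degeneracies: one must ensure that the eliminated varieties $\Gamma_{u_i}$ and $Z$ do not degenerate to positive-dimensional or reducible objects with many extra $\mathbb{F}_q$-points for special choices of the branch vertices, and one must handle the cases where the supposed internal vertices $w_{ij}$ coincide for different pairs $(i,j)$ (the subdivision need only be a subgraph, so collisions are allowed and must be excluded separately). Bounding the number of rational points on the relevant curves uniformly, rather than on average, is exactly where the random algebraic method fails (via Lang–Weil one only gets average control), so the explicit choice of $f$ and $g$ must be made so that B\'ezout-type bounds apply to \emph{every} fibre. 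I expect the bulk of the work to be a careful case analysis of these degenerate configurations, organized so that in each case a direct count shows at most $29$ branch vertices $v_j$ are possible; choosing the defining equations to be as "generic as possible" while remaining explicit (e.g. using monomials of coprime degrees) is what makes this analysis go through.
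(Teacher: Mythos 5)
Your plan correctly identifies the general framework the paper uses: an explicit algebraic graph on $\Theta(q^{3})$ vertices with adjacency given by two low-degree polynomial equations in the first coordinates, all degrees $\asymp q$, and a $K_{3,30}'$-freeness proof by elimination theory with a case analysis of degeneracies. However, as written it is a roadmap rather than a proof, and the gap sits exactly where the content lies. No explicit $f,g$ are produced, and the entire difficulty is that for a generic-looking choice the eliminated system can and does degenerate. The paper takes $x_{2}+y_{3}=x_{1}y_{1}^{2}$ and $x_{3}+y_{2}=x_{1}^{2}y_{1}$ (note the cross between the second and third coordinates, which your symmetric form $x_{2}+y_{2}=f$, $x_{3}+y_{3}=g$ lacks) and, crucially, restricts the first coordinate to $S=\{x\in\mathbb{F}_{p}:x\in[1,\frac{p-5}{6}]\}$ with $p\equiv 5\pmod 6$, precisely so that the quantities $x+y$, $x+y+z+t$, $x+5y$ and $x^{2}+xy+y^{2}$ that arise as leading coefficients of the successive resultants are guaranteed nonzero. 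Without such a restriction the leading terms vanish identically for some configurations and the solution set becomes positive-dimensional; this is the failure mode you name but do not rule out.

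Moreover, your step (iii) as stated cannot close the argument: a variety $Z$ of dimension $\le 1$ over $\mathbb{F}_{q}$ can carry $\Theta(q)$ rational points, so bounding the rational points of a curve does not yield a constant. What is actually needed, and what the paper proves, is that for every fixed triple $a,b,c$ the system in $(x_{1},y_{1},z_{1},w_{1},w_{2},w_{3})$ is zero-dimensional with at most $29$ solutions. This is done by computing iterated resultants (with MAGMA), showing in each of three cases, according to which coordinates of $a,b,c$ coincide, that some resultant is a nonzero univariate polynomial of explicitly bounded degree; the hardest case is handled by exhibiting an auxiliary common neighbour $d$ of $a,b,c$ and exploiting a $\theta_{3,3}$ identity forcing either $w_{1}=a_{1}+b_{1}+c_{1}$ or a linear relation among the internal first coordinates. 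The threshold $30=29+1$ emerges from summing these explicit bounds ($8$, $20$ and $29$ in the three cases), not from a soft B\'ezout or Lang--Weil estimate. So the proposal is a sensible outline of the right kind of argument, but it contains neither the construction nor the verification, which are the proof.
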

Combining with the above upper bound, we have
\begin{corollary}
$\textup{ex}(n,K_{3,30}')=\Theta(n^{\frac{4}{3}})$.
\end{corollary}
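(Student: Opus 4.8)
The plan is to exhibit an explicit family of graphs $\{G_q\}$, indexed by prime powers $q$, with $n:=|V(G_q)|=\Theta(q^{k})$ for a suitable small integer $k$, with $|E(G_q)|=\Theta(n^{4/3})$, and with no copy of $K_{3,30}'$; since $\tfrac{3}{2}-\tfrac{1}{2\cdot 3}=\tfrac{4}{3}$ this yields the theorem (letting $q\to\infty$, and for $n$ not of the special form passing to the largest admissible $q$ with $n(q)\le n$ and padding with isolated vertices, in the usual way). Following the philosophy of the $s=2$ construction of Verstra\"{e}te and Williford \cite{VW19} and of the random algebraic constructions \cite{Bukh2015RAC, Bukh2018}, the graph $G_q$ is to be defined over $\mathbb{F}_q$: its vertices are the points of one or two copies of a low-dimensional affine space over $\mathbb{F}_q$, and two vertices are adjacent exactly when their coordinates satisfy a fixed system of polynomial equations of bounded degree. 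The argument then splits into an edge count and a forbidden-subgraph count, and only the latter is genuinely delicate.

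For the edge count, the defining equations are chosen so that, for a generic vertex $x$, the set of $y$ adjacent to $x$ is cut out inside the ambient affine space by the ``expected'' number of independent equations, hence has size $\Theta(q^{k/3})$; summing over $x$ gives $|E(G_q)|=\Theta(q^{k+k/3})=\Theta(n^{4/3})$. I would carry this out by elementary counting (resultants and successive elimination of coordinates) rather than by invoking the Lang--Weil estimate \cite{LangWeil1954}: this keeps full control of the implied constants and, more importantly, avoids the error term that prevents the purely random algebraic method from reaching a constant as small as $30$. One must also check that the relatively few degenerate vertices, whose neighbourhood has the wrong dimension, contribute a negligible number of edges, which is again routine.

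The heart of the matter is showing $G_q$ contains no $K_{3,30}'$. Suppose it did, with branch vertices $a_1,a_2,a_3$ (the part of size $3$) and $b_1,\dots,b_{30}$, together with pairwise internally disjoint paths $a_i - c_{ij} - b_j$; in particular the $90$ subdivision vertices $c_{ij}$ are distinct from one another and from the branch vertices. Fix $j$. The conditions $c_{ij}\sim a_i$ and $c_{ij}\sim b_j$ for $i=1,2,3$ say that each $c_{ij}$ is a point of $N(a_i)\cap N(b_j)$, and eliminating the coordinates of $c_{1j},c_{2j},c_{3j}$ from the adjacency equations shows that $b_j$ must lie on a variety $W=W(a_1,a_2,a_3)$ depending only on the triple $(a_1,a_2,a_3)$ --- morally, the common ``second neighbourhood'' of the three $a_i$. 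Because there are three apex vertices, $W$ is cut out by an over-determined system, and in the generic case it is a finite set whose size is bounded, by a B\'{e}zout-type estimate, by a constant depending only on the degrees occurring in the construction. The whole design of the equations is aimed at making this constant at most $29$: the bound obtained by naively multiplying degrees is far larger, so one must exploit the specific shape of the equations (for instance, that two of the relevant curves or surfaces already meet in a controlled set, or that the elimination produces a single univariate polynomial of degree at most $29$), together with the requirement that the $c_{ij}$ be distinct, to bring it down. If $|W|\le 29$ for every admissible triple we are done, since then fewer than $30$ vertices $b_j$ are available; the same reasoning applies with the roles of the two coordinate spaces exchanged, covering both ways a copy of $K_{3,30}'$ could sit inside $G_q$.

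The main obstacle --- where essentially all the work lies --- is the analysis of the degenerate triples $(a_1,a_2,a_3)$, those special positions for which $W(a_1,a_2,a_3)$ is positive-dimensional or the elimination step collapses. For such triples the number of candidate $b_j$'s is no longer bounded by a constant, so one cannot conclude directly; instead one must show that a $K_{3,30}'$ still cannot be completed, either because the corresponding subdivision vertices $c_{ij}$ are then forced to coincide, violating internal disjointness of the paths, or because the number of $b$ for which all three sets $N(a_i)\cap N(b)$ are simultaneously nonempty and ``independent enough'' to admit a system of distinct representatives for the $c_{ij}$ is still less than $30$. Executing this case analysis cleanly while simultaneously keeping the generic constant at most $29$ is precisely what pins down the construction and produces the number $30$; being the step most sensitive to the algebra, I expect it to occupy the bulk of the paper.
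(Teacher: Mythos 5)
Your outline correctly identifies the general shape of the lower-bound argument (an explicit algebraic graph over a finite field, adjacency given by low-degree polynomial equations, and a resultant/elimination argument bounding the number of ways three fixed branch vertices can be completed inside a copy of $K_{3,30}'$), but as it stands it has two gaps, one small and one fatal. The small one: the statement to be proved is a $\Theta$ bound, and you only ever discuss the lower bound. The matching upper bound $\textup{ex}(n,K_{3,t}')=O(n^{4/3})$ is not something your construction gives you; in the paper it is imported from Conlon, Janzer and Lee \cite{CJL19}, and your proof must at least invoke it.

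The fatal gap is that you never write down the construction or carry out the elimination, and you explicitly defer exactly the part that contains all the mathematical content (``I expect it to occupy the bulk of the paper''). There is no abstract B\'ezout-type argument that yields the constant $29$: a naive degree count for a system of this shape is far larger, and the whole difficulty is that the bound depends on the specific equations chosen. The paper takes $V=S\times\mathbb{F}_p\times\mathbb{F}_p$ with $S$ the interval $[1,\frac{p-5}{6}]$ (chosen so that $x+y$, $x+y+z+t$, $x+5y$ and $x^{2}+xy+y^{2}$ never vanish on $S$) and joins $x$ to $y$ when $x_{2}+y_{3}=x_{1}y_{1}^{2}$ and $x_{3}+y_{2}=x_{1}^{2}y_{1}$; the bound of $29$ sequences $(x,y,z,w)$ then requires a three-way case analysis on coincidences among the coordinates of $a,b,c$, explicit MAGMA-computed resultants whose leading coefficients must be shown nonzero using the properties of $S$, and --- in the hardest case, where all the eliminants vanish identically --- the observation that $a,b,c$ then admit a (possibly virtual) common neighbour $d$, which reduces matters to an analysis of $\theta_{3,3}$ subgraphs. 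None of this is recoverable from the generic strategy you describe; in particular your dichotomy between generic and degenerate triples does not by itself produce any finite bound, let alone $29$. So the proposal is a reasonable plan in the same spirit as the paper, but it is not a proof.
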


The rest of this paper is organized as follows. In Section~\ref{pre}, we will give some basics about the resultant of polynomials. In Section~\ref{mainpart}, we prove our main result. Section~\ref{conclusion} concludes our paper. All computations have been done by MAGMA \cite{BCP97}.
\section{Preliminaries}\label{pre}
Our main technique is the resultant of polynomials, which has been used in \cite{ZXG19}. For the convenience of readers, we recall some basics about the resultant of polynomials, which will be used in the following section. Let $\mathbb{F}$ be a field, and $\mathbb{F}[x]$ be the polynomial ring with coefficients in $\mathbb{F}$.
\begin{definition}
Let $f(x),g(x)\in\mathbb{F}[x]$ with $f(x)=a_{m}x^{m}+\cdots+a_{1}x+a_{0}$ and $g(x)=b_{n}x^{n}+\cdots+b_{1}x+b_{0}$, then the resultant of $f$ and $g$ is defined by the determinant of the following $(m+n+2)\times (m+n+2)$ matrix,
\begin{align*}
\left(
            \begin{array}{cccccccc}
              a_{0} & a_{1} & \cdots & a_{m} &   &   &&   \\
               & a_{0} & \cdots  & a_{m-1} & a_{m} && & \\
                &   & \cdots  & \cdots  & \cdots & &  &\\
              &  &   &   &   & a_{0} & \cdots  & a_{m}\\
              b_{0} & b_{1} & \cdots &\cdots & b_{n} &     &\\
               & b_{0} & \cdots & \cdots  & b_{n-1} & b_{n} & & \\
                &   &  \cdots &  \cdots &\cdots  & \cdots  &   &\\
                &   &   &   &b_{0} & \cdots& \cdots  & b_{n}\\
            \end{array}
          \right),
\end{align*}
which is denoted by $R(f,g)$.
\end{definition}
The resultant of two polynomials has the following property.
\begin{lemma}[\cite{Fuhrmann2012}]
If $\text{gcd}(f(x),g(x))=h(x)$, where $\text{deg}(h(x))\ge1$, then $R(f,g)=0$. In particular, if $f$ and $g$ have a common root in $\mathbb{F}$, then $R(f,g)=0$.
\end{lemma}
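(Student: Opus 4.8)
The plan is to unwind the definition of $R(f,g)$ as the determinant of the Sylvester matrix $M$ and to reduce its vanishing to a statement about linear dependence of polynomials. Reading the matrix row by row, its rows are exactly the coefficient sequences (constant term first) of the shifted polynomials $f, xf, x^2f, \dots$ in the upper block and $g, xg, x^2g, \dots$ in the lower block, each viewed as a vector in the coefficient space $\mathcal{P}_{<N}$ of polynomials of degree less than $N$, where $N$ is the size of $M$. Under this dictionary an $\mathbb{F}$-linear combination of the rows of $M$ vanishes as a vector if and only if the same combination of the shifted polynomials is the zero polynomial. Consequently $R(f,g)=\det M=0$ is equivalent to the rows of $M$ being $\mathbb{F}$-linearly dependent, and it suffices to exhibit such a dependence whenever $\gcd(f,g)=h$ has $\deg h\ge 1$.

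First I would observe that, because $h\mid f$ and $h\mid g$, every shifted polynomial $x^i f$ and $x^j g$ is divisible by $h$, and hence so is every $\mathbb{F}$-linear combination of them. Through the dictionary above, the entire row space of $M$ therefore lands inside $V_h$, the subspace of $\mathcal{P}_{<N}$ consisting of the polynomials divisible by $h$. Writing each element of $V_h$ as $h\cdot q$ with $\deg q<N-\deg h$ identifies $V_h$ with $\mathcal{P}_{<N-\deg h}$, so $\dim V_h=N-\deg h\le N-1$. The $N$ rows of $M$ thus correspond to $N$ vectors lying in a subspace of dimension strictly less than $N$; any such family is linearly dependent, so the rows of $M$ are dependent and $\det M=R(f,g)=0$. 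If one prefers an explicit witness, write $f=hf_1$ and $g=hg_1$ and expand $g_1=\sum_i c_ix^i$, $f_1=\sum_j d_jx^j$: the polynomial identity $g_1f-f_1g=0$ becomes $\sum_i c_i(x^if)-\sum_j d_j(x^jg)=0$, a nontrivial dependence among the row-polynomials, yielding the same conclusion.

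Finally, the ``in particular'' clause follows at once: if $f$ and $g$ share a root $\alpha\in\mathbb{F}$, then $x-\alpha\in\mathbb{F}[x]$ divides both $f$ and $g$ in $\mathbb{F}[x]$, so $x-\alpha\mid\gcd(f,g)$ and $\deg(\gcd(f,g))\ge 1$; applying the first part with $h=\gcd(f,g)$ gives $R(f,g)=0$. The only step demanding genuine care is the dimension bookkeeping in the middle paragraph---matching the number of shifted copies of $f$ and of $g$ to the size $N$ of $M$ so that the inclusion ``row space $\subseteq V_h$'' really compares $N$ vectors against an $N$-dimensional ambient space. Once the row-to-polynomial dictionary is pinned down this is routine, and the conclusion $\det M=0$ is moreover insensitive to the precise normalization chosen for the Sylvester matrix.
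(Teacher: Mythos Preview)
Your argument is correct: the row-to-polynomial dictionary is the standard way to read the Sylvester matrix, and once all rows are seen to lie in the proper subspace $V_h\subset\mathcal{P}_{<N}$ of multiples of $h$, the vanishing of the determinant is immediate. The explicit dependence $g_1f-f_1g=0$ you write down is also valid, since $\deg g_1$ and $\deg f_1$ are each strictly less than the corresponding number of shifted rows.

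There is nothing to compare against in the paper itself: the lemma is quoted from the textbook \cite{Fuhrmann2012} and stated without proof, serving only as background for the resultant computations that follow. Your write-up supplies exactly the standard justification that the paper omits. One minor remark: the paper records the Sylvester matrix as having size $(m+n+2)\times(m+n+2)$, which is nonstandard (the usual size is $(m+n)\times(m+n)$); your closing observation that the argument is insensitive to the precise normalization is therefore well placed.
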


When we consider multivariable polynomials, we can define the resultant similarly, and the above lemma still holds when we fix one variable. For any $f,g\in\mathbb{F}[x_{1},\dots,x_{n}]$, let $R(f,g,x_{i})$ denote the resultant of $f$ and $g$ with respect to $x_{i}$, then we have $R(f,g,x_{i})\in\mathbb{F}[x_{1},\dots,x_{i-1},x_{i+1},x_{n}]$.
\section{Construction of $K_{3,30}'$-free graphs}\label{mainpart}
In this section, we construct a $K_{3,30}'$-free graph with $n$ vertices and $\Omega(n^{\frac{4}{3}})$ edges. Let $\mathbb{F}_{p}$ be a finite field, where $p$ is an odd prime with $p\equiv5\pmod{6}$ and $p>11$. Let $S=\{x: x \in \mathbb{F}_{p}, x\in [1,\frac{p-5}{6}]\}$. Then we have the following lemma.
\begin{lemma}\label{lemma3}
For any $x,y,z,t\in S$, we have $x+y\ne0$, $x+y+z+t\ne0$, $x+5y\ne0$, and $x^{2}+xy+y^{2}\ne0$.
\end{lemma}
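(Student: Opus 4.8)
The plan is to separate the four assertions into three ``additive'' ones, which are pure interval arithmetic, and the single quadratic one, which uses the congruence condition on $p$. For the additive parts, the key observation is that every element of $S$, viewed as an integer, lies in $[1,\frac{p-5}{6}]$ (note that $p\equiv 5\pmod 6$ makes $\frac{p-5}{6}$ a genuine positive integer, and $p>11$ makes $S$ nonempty), so a fixed positive combination of boundedly many such integers is again a positive integer that is simply too small to be a multiple of $p$. Concretely: for $x,y\in S$ one has $2\le x+y\le \frac{p-5}{3}<p$; for $x,y,z,t\in S$ one has $4\le x+y+z+t\le \frac{2(p-5)}{3}<p$ (since $2p-10<3p$); and for $x,y\in S$ one has $6\le x+5y\le \frac{p-5}{6}+5\cdot\frac{p-5}{6}=p-5<p$. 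In each case the integer value lies strictly between $0$ and $p$, hence is nonzero in $\mathbb{F}_p$.

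For the quadratic inequality $x^{2}+xy+y^{2}\neq 0$, I would argue by contradiction. Since $x,y\in S$ are nonzero in $\mathbb{F}_p$, multiplying a putative relation $x^{2}+xy+y^{2}=0$ by $4$ and completing the square yields $(2x+y)^{2}=-3y^{2}$, so $-3=\left(\tfrac{2x+y}{y}\right)^{2}$ would be a quadratic residue modulo $p$. By the standard evaluation of this Legendre symbol, $-3$ is a quadratic residue modulo an odd prime $p\neq 3$ if and only if $p\equiv 1\pmod 3$; but $p\equiv 5\pmod 6$ forces $p\equiv 2\pmod 3$, a contradiction. Equivalently, $x^{2}+xy+y^{2}=0$ with $y\neq 0$ would make $x/y$ a primitive cube root of unity in $\mathbb{F}_p^{\times}$, which is impossible because $3\nmid p-1$.

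The only genuine content is this last number-theoretic fact, which is elementary and standard, so I do not anticipate any real obstacle; the additive parts are routine. The point of the lemma is simply to certify that the various linear and quadratic expressions occurring later never vanish on $S$, so that the subsequent algebraic manipulations over $\mathbb{F}_p$ — clearing denominators and dividing by such quantities — are valid.
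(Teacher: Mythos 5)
Your proof is correct and follows essentially the same route as the paper: the three linear non-vanishing claims by bounding the integer representatives strictly between $0$ and $p$, and the quadratic one by completing the square and using that $-3$ is a non-residue modulo $p$ when $p\equiv 5\pmod 6$. You simply spell out the interval arithmetic and the Legendre-symbol evaluation in more detail than the paper does.
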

\begin{proof}
Since $x,y,z,t\in \mathbb{F}_{p}$ and $x,y,z,t\in[1,\frac{p-5}{6}]$, then we have $x+y\ne0$, $x+y+z+t\ne0$, $x+5y\ne0$.

Note that $p\equiv5\pmod6$, then $-3$ is a non-quadratic residue module $p$. Hence, $x^{2}+xy+y^{2}=(x+\frac{y}{2})^{2}+3(\frac{y}{2})^{2}\ne0$.
\end{proof}

Now we can give our construction.
\begin{construction}
 The graph $G_{p}$ is defined with vertex set $V:=S\times\mathbb{F}_{p}\times\mathbb{F}_{p}$, where $x=(x_{1},x_{2},x_{3})\in V$ is joined to $y=(y_{1},y_{2},y_{3})\in V$ if $x\ne y$ and
\begin{align*}
&x_{2}+y_{3}=x_{1}y_{1}^{2},\\
&x_{3}+y_{2}=x_{1}^{2}y_{1}.
\end{align*}
\end{construction}
By fixing a vertex $x$, it is easy to see that the choice of $y_{1}$ determines a unique neighbor $y$ of $x$ except $x=y$. Therefore each vertex has degree at least $\frac{p-11}{6}$. Hence $G_{p}$ has $n:=\frac{(p-5)p^{2}}{6}$ vertices and at least $\frac{1}{72}(p-5)(p-11)p^{2}=\Omega(n^{\frac{4}{3}})$ edges. In the following of this section, we will prove that $G_{p}$ is $K_{3,30}'$-free.

We begin with the following simple lemma.
\begin{lemma}\label{lemma1}
If $x,y\in V$ are distinct and have a common neighbor, then $x_{1}\ne y_{1}$, $x_{2}\ne y_{2}$ and $x_{3}\ne y_{3}$.
\end{lemma}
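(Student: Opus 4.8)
The plan is to fix a common neighbour $z=(z_1,z_2,z_3)$ of $x$ and $y$ and eliminate the ``free'' coordinates $z_2,z_3$ by subtraction. First observe that the incidence relation is symmetric: writing out the two equations defining ``$x$ joined to $y$'' and the two defining ``$y$ joined to $x$'', one checks that they are the same pair of equations, the first of one matching the second of the other after commuting the products in $\mathbb{F}_p$. Hence we may record the edges $zx$ and $zy$ with $z$ playing the role of the first endpoint in each, namely
\begin{align*}
z_2+x_3=z_1x_1^2,\qquad z_3+x_2=z_1^2x_1,\\
z_2+y_3=z_1y_1^2,\qquad z_3+y_2=z_1^2y_1.
\end{align*}
Subtracting the two equations in the first column, and then the two in the second column, yields
\[
x_3-y_3=z_1\bigl(x_1^2-y_1^2\bigr),\qquad x_2-y_2=z_1^2\bigl(x_1-y_1\bigr),
\]
which is the only information I will need.

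From here I would dispatch the three non-equalities by contradiction, using just two facts: $z_1\in S$ forces $z_1\ne 0$, and $x_1,y_1\in S$ forces $x_1+y_1\ne 0$ by Lemma~\ref{lemma3}. If $x_1=y_1$, the two displayed identities give $x_3=y_3$ and $x_2=y_2$, so $x=y$, contradicting that $x,y$ are distinct; hence $x_1\ne y_1$. If $x_2=y_2$, then $z_1^2(x_1-y_1)=0$, and since $z_1\ne 0$ this forces $x_1=y_1$, contradicting the previous sentence; hence $x_2\ne y_2$. Finally, if $x_3=y_3$, then $z_1(x_1-y_1)(x_1+y_1)=0$; since $z_1\ne 0$ and $x_1+y_1\ne 0$, again $x_1=y_1$, a contradiction; hence $x_3\ne y_3$.

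There is no real obstacle: the argument is two subtractions and a short case split. The one point to state carefully is that $x_1^2=y_1^2$ over $\mathbb{F}_p$ does not by itself give $x_1=y_1$, and this is exactly where the hypothesis $p\equiv 5\pmod 6$ enters, through the conclusion $x_1+y_1\ne 0$ of Lemma~\ref{lemma3} for elements of $S$. (The substantive work, namely that $G_p$ is $K_{3,30}'$-free, comes afterwards and will presumably invoke the resultant estimates of Section~\ref{pre} to bound the number of common neighbours of a triple of vertices together with their subdivision vertices; Lemma~\ref{lemma1} is just the first structural observation feeding into that analysis.)
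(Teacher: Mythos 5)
Your proof is correct and follows essentially the same route as the paper's: write the two adjacency equations at the common neighbour, subtract, and rule out $x_1=y_1$ and $x_1+y_1=0$ using $z_1\ne 0$ (the paper writes out only the $x_3\ne y_3$ case and declares the others similar). One small quibble: $x_1+y_1\ne 0$ needs only that $x_1,y_1\in[1,\frac{p-5}{6}]$, not the congruence $p\equiv 5\pmod 6$, which Lemma~\ref{lemma3} uses only to guarantee $x^2+xy+y^2\ne 0$.
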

\begin{proof}
We will only prove $x_{3}\ne y_{3}$, the others are similar. Suppose $x,y$ have a common neighbor $u$, then
\begin{align*}
&x_{3}+u_{2}=x_{1}^{2}u_{1},\\
&y_{3}+u_{2}=y_{1}^{2}u_{1}.
\end{align*}
If $x_{3}=y_{3}$, then $u_{1}(x_{1}^{2}-y_{1}^{2})=0$. Hence $x_{1}=y_{1}$ or $x_{1}+y_{1}=0$. If $x_{1}=y_{1}$, then it is easy to get that $x=y$, which is a contradiction. If $x_{1}+y_{1}=0$, this contradicts to the definition of $S$. Hence $x_{3}\ne y_{3}$.
\end{proof}
For any given $a,b,c\in V$ with $a,b,c$ pairwise distinct, we estimate the number of sequences $(x,y,z, w)\in V^{4}$ with $x,y,z,w$ pairwise distinct, such that $ax,xw,by,yw,cz,zw$ are edges in $G_{p}$. We will prove that there are at most $29$ different such sequences. By the definition of graph $G_{p}$, we have
\begin{align*}
&a_{2}+x_{3}=a_{1}x_{1}^{2}, && a_{3}+x_{2}=a_{1}^{2}x_{1},\\
&w_{2}+x_{3}=w_{1}x_{1}^{2}, && w_{3}+x_{2}=w_{1}^{2}x_{1},\\
&b_{2}+y_{3}=b_{1}y_{1}^{2}, && b_{3}+y_{2}=b_{1}^{2}y_{1},\\
&w_{2}+y_{3}=w_{1}y_{1}^{2}, && w_{3}+y_{2}=w_{1}^{2}y_{1},\\
&c_{2}+z_{3}=c_{1}z_{1}^{2}, && c_{3}+z_{2}=c_{1}^{2}z_{1},\\
&w_{2}+z_{3}=w_{1}z_{1}^{2}, && w_{3}+z_{2}=w_{1}^{2}z_{1}.
\end{align*}
Cancelling $x_{2},x_{3},y_{2},y_{3},z_{2},z_{3}$ from the above equations, we can get the following equations
\begin{align}
&f_{1}:=a_{2}-w_{2}-x_{1}^{2}(a_{1}-w_{1})=0,\label{eq1}\\
&f_{2}:=a_{3}-w_{3}-x_{1}(a_{1}^{2}-w_{1}^{2})=0,\label{eq2}\\
&f_{3}:=b_{2}-w_{2}-y_{1}^{2}(b_{1}-w_{1})=0,\label{eq3}\\
&f_{4}:=b_{3}-w_{3}-y_{1}(b_{1}^{2}-w_{1}^{2})=0,\label{eq4}\\
&f_{5}:=c_{2}-w_{2}-z_{1}^{2}(c_{1}-w_{1})=0,\label{eq5}\\
&f_{6}:=c_{3}-w_{3}-z_{1}(c_{1}^{2}-w_{1}^{2})=0.\label{eq6}
\end{align}
In the following of this section, we divide our discussions into three subsections.
\subsection{$a_{1}=b_{1}$ or $a_{1}=c_{1}$ or $b_{1}=c_{1}$}
Without loss of generality, we assume that $a_{1}=b_{1}$. Then Equations (\ref{eq1})-(\ref{eq6}) become
\begin{align*}
&f_{1}:=a_{2}-w_{2}-x_{1}^{2}(a_{1}-w_{1})=0,\\
&f_{2}:=a_{3}-w_{3}-x_{1}(a_{1}^{2}-w_{1}^{2})=0,\\
&f_{3}:=b_{2}-w_{2}-y_{1}^{2}(a_{1}-w_{1})=0,\\
&f_{4}:=b_{3}-w_{3}-y_{1}(a_{1}^{2}-w_{1}^{2})=0,\\
&f_{5}:=c_{2}-w_{2}-z_{1}^{2}(c_{1}-w_{1})=0,\\
&f_{6}:=c_{3}-w_{3}-z_{1}(c_{1}^{2}-w_{1}^{2})=0.
\end{align*}
We begin with the following lemma.
\begin{lemma}\label{lemma4}
If $a_{1}=b_{1}$, then $a_{2}\ne b_{2}$ and $a_{3}\ne b_{3}$.
\end{lemma}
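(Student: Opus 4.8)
The goal is to show that if $a_1=b_1$ then $a_2\ne b_2$ and $a_3\ne b_3$. The plan is to argue by contradiction, exploiting the structural relations forced by the existence of a common neighbor of $a$ and $b$ (namely $x$), together with the case hypothesis $a_1=b_1$.

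First I would suppose $a_2=b_2$. Subtracting the defining equations for the edges $ax$ and $bx$ (the first coordinate-2 relation) gives $a_2+x_3=a_1x_1^2$ and $b_2+x_3=b_1x_1^2$; since $a_1=b_1$ these are identical once $a_2=b_2$, so this alone gives no contradiction. Instead I would bring in the coordinate-3 relations $a_3+x_2=a_1^2x_1$ and $b_3+x_2=b_1^2x_1$; again with $a_1=b_1$ these force $a_3=b_3$. So if $a_2=b_2$ then also $a_3=b_3$, and combined with $a_1=b_1$ this yields $a=b$, contradicting that $a,b,c$ are pairwise distinct. Hence $a_2\ne b_2$. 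The argument for $a_3\ne b_3$ is symmetric: assuming $a_3=b_3$ and using $a_1=b_1$ in the coordinate-2 relations forces $a_2=b_2$, hence $a=b$, again a contradiction.

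An alternative and perhaps cleaner route is simply to invoke Lemma~\ref{lemma1}: since $a$ and $b$ are distinct and have the common neighbor $x$, that lemma already gives $a_1\ne b_1$, $a_2\ne b_2$, $a_3\ne b_3$ outright. But the present lemma is stated under the \emph{additional} hypothesis $a_1=b_1$, which would contradict Lemma~\ref{lemma1} if $a$ and $b$ genuinely had a common neighbor; so the point of Lemma~\ref{lemma4} must be that here we only know $a,b,c$ are pairwise distinct vertices (with $a_1=b_1$), not that every pair among the relevant vertices has a common neighbor in the configuration. Thus the honest proof is the direct one above: from $a_1=b_1$, equality in one of $\{a_2,a_3\}$ propagates to equality in the other via the two families of edge equations with $x$, forcing $a=b$.

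The main obstacle, such as it is, is bookkeeping: one must be careful about \emph{which} common vertex to use (here $x$ is common to $a$ and $b$) and must check that the hypothesis $a_1=b_1$ really does collapse the two relevant equations so that a single coordinate coincidence propagates. There is no deep arithmetic here — no need to invoke $p\equiv 5\pmod 6$ or the set $S$ — it is purely the rigidity of the two bilinear edge equations once the first coordinates agree. So I expect the proof to be two or three lines: assume $a_2=b_2$, subtract the two coordinate-3 edge equations for $x$, use $a_1=b_1$ to deduce $a_3=b_3$, conclude $a=b$, contradiction; then repeat symmetrically for $a_3\ne b_3$.
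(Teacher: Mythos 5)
Your proof rests on a misreading of the configuration. In this section the six edges under consideration are $ax,xw,by,yw,cz,zw$: the vertex $b$ is adjacent to $y$, not to $x$, so the relations $b_{2}+x_{3}=b_{1}x_{1}^{2}$ and $b_{3}+x_{2}=b_{1}^{2}x_{1}$ that you subtract are not among the hypotheses. You half-notice the tension yourself: if $a$ and $b$ really shared the neighbor $x$, Lemma~\ref{lemma1} would already give $a_{1}\ne b_{1}$ and the present lemma would be vacuous. But your resolution --- conceding that $a,b$ need not share a neighbor while still running the ``direct proof above,'' which uses exactly the nonexistent edge $bx$ --- is self-contradictory. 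Indeed, with your equations the hypothesis $a_{1}=b_{1}$ alone would force $a=b$ without ever assuming $a_{2}=b_{2}$, which is a further sign that the wrong adjacencies are being used.

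The actual argument must travel along the path $a$--$x$--$w$--$y$--$b$, which is what the paper's equations $f_{1},\dots,f_{4}$ encode. Subtracting the edge equations for $ax$ and $xw$ gives $a_{2}-w_{2}=(a_{1}-w_{1})x_{1}^{2}$, and doing the same for $by$ and $yw$ with $b_{1}=a_{1}$ gives $b_{2}-w_{2}=(a_{1}-w_{1})y_{1}^{2}$. If $a_{2}=b_{2}$, these combine to $(a_{1}-w_{1})(x_{1}^{2}-y_{1}^{2})=0$; now $a_{1}\ne w_{1}$ because $a$ and $w$ share the neighbor $x$ (Lemma~\ref{lemma1}), $x_{1}\ne y_{1}$ because $x$ and $y$ share the neighbor $w$ (Lemma~\ref{lemma1} again), and $x_{1}+y_{1}\ne 0$ because $x_{1},y_{1}\in S$ (Lemma~\ref{lemma3}) --- a contradiction. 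The case $a_{3}=b_{3}$ is analogous, via $(a_{1}^{2}-w_{1}^{2})(x_{1}-y_{1})=0$ together with $a_{1}+w_{1}\ne 0$. Note also that, contrary to your closing remark, the set $S$ is genuinely needed here (to exclude $x_{1}=-y_{1}$ and $a_{1}=-w_{1}$); only the condition $p\equiv 5\pmod{6}$ plays no role in this particular lemma.
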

\begin{proof}
If $a_{2}=b_{2}$, then by the above equations, we have $x_{1}^{2}=y_{1}^{2}$, hence $x_{1}=y_{1}$ or $x_{1}=-y_{1}$, which contradicts to Lemmas~\ref{lemma3} and \ref{lemma1}.

If $a_{3}=b_{3}$, then by the above equations again, we have $x_{1}=y_{1}$, which contradicts to Lemma~\ref{lemma3}.
\end{proof}
Now we regard $f_{i}$ ($i=1,2,\dots,6$) as polynomials with variables $x_{1},y_{1},z_{1},w_{1},w_{2},w_{3}$. By a MAGMA program, we can get that
\begin{align*}
&R(f_{1},f_{2},x_{1})=g_{1}\cdot(a_{1}-w_{1}),\\
&R(f_{3},f_{4},y_{1})=g_{2}\cdot(a_{1}-w_{1}),\\
&R(f_{5},f_{6},z_{1})=g_{3}\cdot(c_{1}-w_{1}).
\end{align*}
By Lemma~\ref{lemma1}, we have $a_{1}\ne w_{1}$ and $c_{1}\ne w_{1}$. Then we can compute to get that
\begin{align*}
&R(g_{1},g_{2},w_{2})=g_{4}\cdot(a_{1}-w_{1})(a_{1}+w_{1})^{2},\\
&R(g_{1},g_{3},w_{2})=g_{5}.
\end{align*}
By Lemmas~\ref{lemma3} and \ref{lemma1}, $a_{1}+w_{1}\ne0$ and $a_{1}-w_{1}\ne0$. Let $h=R(g_{4},g_{5},w_{3})$, then $h$ is a polynomial of $w_{1}$ with degree $8$. We can write $h$ as $h=\sum_{i=0}^{8}h_{i}w_{1}^{i}$. Then we can compute to get that
\begin{align*}
h_{8}=(a_{2}-b_{2})^{2}(a_{1}-c_{1}).
\end{align*}
By Lemma~\ref{lemma4}, $a_{2}\ne b_{2}$ and $a_{3}\ne b_{3}$.
If $a_{1}\ne c_{1}$, then there are at most 8 solutions for $w_{1}$. For any fixed $w_{1}$, $g_{4}$ is a polynomial of $w_{3}$ with degree 1. We write $g_{4}$ as $g_{4}=s_{1}w_{3}+s_{0}$, then $s_{1}=a_{3}-b_{3}\ne0$. Hence there is at most $1$ solution for $w_{3}$. If $w_{1}$ and $w_{3}$ are given, then all the remaining variables are uniquely determined. Hence there are at most $8$ different sequences of $(x,y,z,w)$ for this case.

If $a_{1}=c_{1}$, then we can compute to get that $g_{5}=g_{5}'\cdot(a_{1}-w_{1})(a_{1}+w_{1})^{2}$. Let $h'=R(g_{4},g_{5}',w_{3})$, then $h'$ is a polynomial of $w_{1}$ with degree $3$.  We can write $h'$ as $h'=\sum_{i=0}^{3}k_{i}w_{1}^{i}$. Regarding $k_{i}$ ($i=0,1,2,3$) as polynomials with variable $a_{2}$, then by a MAGMA program, we have
\begin{align*}
R(k_{0},k_{3},a_{2})=(a_{3}-b_{3})^{2}(a_{3}-c_{3})(a_{3}-b_{3}).
\end{align*}
By Lemma~\ref{lemma4}, $R(k_{0},k_{3},a_{2})\ne0$. Hence there are at most 3 solutions for $w_{1}$. Similarly as above, for any fixed $w_{1}$, there is at most $1$ solution for $w_{3}$. If $w_{1}$ and $w_{3}$ are given, then all the remaining variables are uniquely determined. Hence there are at most $3$ different sequences of $(x,y,z,w)$ for this case.

Therefore, if $a_{1}=b_{1}$ or $a_{1}=c_{1}$ or $b_{1}=c_{1}$, then there are at most $8$ different sequences of $(x,y,z,w)$.
\subsection{$a_{1}\ne b_{1}$, $a_{1}\ne c_{1}$, $b_{1}\ne c_{1}$ and $a_{2}=b_{2}$ or $a_{2}=c_{2}$ or $b_{2}=c_{2}$}
Without loss of generality, we assume that $a_{2}=b_{2}$. Then Equations (\ref{eq1})-(\ref{eq6}) become
\begin{align*}
&f_{1}:=a_{2}-w_{2}-x_{1}^{2}(a_{1}-w_{1})=0,\\
&f_{2}:=a_{3}-w_{3}-x_{1}(a_{1}^{2}-w_{1}^{2})=0,\\
&f_{3}:=a_{2}-w_{2}-y_{1}^{2}(b_{1}-w_{1})=0,\\
&f_{4}:=b_{3}-w_{3}-y_{1}(b_{1}^{2}-w_{1}^{2})=0,\\
&f_{5}:=c_{2}-w_{2}-z_{1}^{2}(c_{1}-w_{1})=0,\\
&f_{6}:=c_{3}-w_{3}-z_{1}(c_{1}^{2}-w_{1}^{2})=0.
\end{align*}
Now we regard $f_{i}$ ($i=1,2,\dots,6$) as polynomials with variables $x_{1},y_{1},z_{1},w_{1},w_{2},w_{3}$. By a MAGMA program, we can get that
\begin{align*}
&R(f_{1},f_{2},x_{1})=g_{1}\cdot(a_{1}-w_{1}),\\
&R(f_{3},f_{4},y_{1})=g_{2}\cdot(b_{1}-w_{1}),\\
&R(f_{5},f_{6},z_{1})=g_{3}\cdot(c_{1}-w_{1}).
\end{align*}
By Lemma~\ref{lemma1}, we have $a_{1}\ne w_{1}, b_{1}\ne w_{1}$ and $c_{1}\ne w_{1}$. Then we can compute to get that
\begin{align*}
&R(g_{1},g_{2},w_{2})=g_{4},\\
&R(g_{1},g_{3},w_{2})=g_{5},\\
&R(g_{4},g_{5},w_{3})=h\cdot (a_{1}-w_{1})^{2}(a_{1}+w_{1})^{4},
\end{align*}
where $h$ is a polynomial of $w_{1}$ with degree $10$. By Lemmas~\ref{lemma3} and \ref{lemma1}, $a_{1}\ne w_{1}$ and $a_{1}+w_{1}\ne0$. We can write $h$ as $h=\sum_{i=0}^{10}h_{i}w_{1}^{i}$. Then we can compute to get that
\begin{align*}
h_{10}=(a_{2}-c_{2})^{2}(a_{1}-b_{1})^{2}.
\end{align*}

If $a_{2}\ne c_{2}$, then there are at most 10 solutions for $w_{1}$. For any fixed $w_{1}$, $g_{4}$ and $g_{5}$ are polynomials of $w_{3}$ with degree 2. We write $g_{4}$ and $g_{5}$ as $g_{4}=\sum_{i=0}^{2}s_{i}w_{3}^{i}$ and $g_{5}=\sum_{i=0}^{2}t_{i}w_{3}^{i}$, then $s_{2}=s_{2}'\cdot(a_{1}-b_{1})$ and $t_{2}=t_{2}'\cdot(a_{1}-c_{1})$. We can compute to get that $s_{2}'-t_{2}'=(b_{1}-c_{1})(a_{1}+b_{1}+c_{1}+w_{1})$. By Lemmas~\ref{lemma3} and \ref{lemma1}, we have $s_{2}'-t_{2}'\ne0$. Hence there is at least one of $s_{2},t_{2}$ not being 0, then there are at most $2$ solutions for $w_{3}$. If $w_{1}$ and $w_{3}$ are given, then all the remaining variables are uniquely determined. Hence there are at most $20$ different sequences of $(x,y,z,w)$ for this case.

If $a_{2}=c_{2}$, then $h_{i}=0$ for $i=6,7,8,9,10$. We can regard $h_{4}$ and $h_{5}$ as polynomials with variable $a_{1}$. Then by a MAGMA program, we have
\begin{align*}
R(h_{4},h_{5},a_{1})=(b_{1}-c_{1})^{2}(a_{3}-b_{3})^{4}(a_{3}-c_{3})^{4}(b_{3}-c_{3})^{4}.
\end{align*}

If $R(h_{4},h_{5},a_{1})\ne0$, then at least one of $h_{4},h_{5}$ is not 0. Hence there are at most 5 solutions for $w_{1}$. For any fixed $w_{1}$, through a similar discussion as above, there are at most $2$ solutions for $w_{3}$. If $w_{1}$ and $w_{3}$ are given, then all the remaining variables are uniquely determined. Hence there are at most $10$ different sequences of $(x,y,z,w)$ for this case.

If $R(h_{4},h_{5},a_{1})=0$, without loss of generality, we assume that $a_{3}=b_{3}$. Then we can compute to get that
\begin{align*}
g_{4}=(a_{3}-w_{3})^{2}(a_{1}-b_{1})\cdot g_{4}',
\end{align*}
where $g_{4}'=-w_{1}^{2}+(a_{1}+b_{1})w_{1}+a_{1}^{2}+a_{1}b_{1}+b_{1}^{2}$. It is easy to see that $a_{3}\ne w_{3}$ and $a_{1}\ne b_{1}$, then there are at most 2 solutions for $w_{1}$. For any fixed $w_{1}$, $g_{5}$ is a polynomial of $w_{3}$ with degree 2. We write $g_{5}$ as $g_{5}=\sum_{i=0}^{2}t_{i}w_{3}^{i}$, then $t_{2}=t_{2}'\cdot(a_{1}-c_{1})$. We can compute to get that $g_{4}'-t_{2}'=(b_{1}-c_{1})(a_{1}+b_{1}+c_{1}+w_{1})$. By Lemmas~\ref{lemma3} and \ref{lemma1}, we have $g_{4}'-t_{2}'\ne0$. Hence there are at most $2$ solutions for $w_{3}$. If $w_{1}$ and $w_{3}$ are given, then all the remaining variables are uniquely determined. Hence there are at most $4$ different sequences of $(x,y,z,w)$ for this case.

Therefore, if $a_{1}\ne b_{1}$, $a_{1}\ne c_{1}$, $b_{1}\ne c_{1}$ and $a_{2}=b_{2}$ or $a_{2}=c_{2}$ or $b_{2}=c_{2}$, then there are at most $20$ different sequences of $(x,y,z,w)$.

\subsection{$a_{1}\ne b_{1}$, $a_{1}\ne c_{1}$, $b_{1}\ne c_{1}$, $a_{2}\ne b_{2}$, $a_{2}\ne c_{2}$ and $b_{2}\ne c_{2}$}
For this case, we begin with the following lemmas. A theta graph $\theta_{k,t}$ is a graph made of $t$ internally disjoint paths of length $k$ connecting two endpoints.
\begin{lemma}\label{lemma2}
If $G_{p}$ contains a $\theta_{3,3}$, and the set of edges $\{da,db,dc,ax,by,cz,wx,wy,wz\}$ form a $\theta_{3,3}$, then $w_{1}=a_{1}+b_{1}+c_{1}$ or $a_{1}y_{1}-a_{1}z_{1}-b_{1}x_{1}+b_{1}z_{1}+c_{1}x_{1}-c_{1}y_{1}=0$.
\end{lemma}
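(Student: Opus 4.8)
The plan is to follow the same resultant-elimination strategy already used in Sections~3.1 and~3.2, but now applied to the configuration coming from a $\theta_{3,3}$ with a distinguished apex $d$. We have two common vertices $d$ and $w$, joined to $a,b,c$ on one side and $x,y,z$ on the other, with the path structure $d\!-\!a\!-\!x\!-\!w$, $d\!-\!b\!-\!y\!-\!w$, $d\!-\!c\!-\!z\!-\!w$. Writing out the adjacency equations for each of the nine edges $da,db,dc,ax,by,cz,wx,wy,wz$ and cancelling the ``second and third coordinates'' of $x,y,z$ exactly as in Equations~(\ref{eq1})--(\ref{eq6}), I would arrive at a system of polynomial equations in the first coordinates $a_1,b_1,c_1,d_1,w_1,x_1,y_1,z_1$ (with $d_2,d_3,w_2,w_3$ also appearing). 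The new feature compared with the earlier subsections is that the vertex $a$ (resp.\ $b$, $c$) is now constrained by \emph{two} edges, $da$ and $ax$ (resp.\ $db,by$ and $dc,cz$), rather than one, so the relations linking the $a_i$'s to $x_1$ and the $d_i$'s are richer.

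First I would fix the edges $da,ax,wx$: from $d_2+a_3=d_1a_1^2$, $a_2+x_3=a_1x_1^2$, $a_3+x_2=a_1^2x_1$ (and the analogous $d$--$a$ and $w$--$x$ relations) I can eliminate $a_2,a_3,x_2,x_3$ to get relations purely among $a_1,d_1,w_1,x_1$ and the coordinates $d_2,d_3,w_2,w_3$; doing the same for the $b,y$ and $c,z$ chains gives parallel relations. Then I would take resultants to eliminate $x_1,y_1,z_1$ one at a time, and afterwards eliminate $w_2$ and $w_3$, tracking the spurious factors (like $a_1-w_1$, $a_1+w_1$, etc.) that Lemmas~\ref{lemma3} and~\ref{lemma1} guarantee are nonzero. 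The claim is that after all this elimination the resulting polynomial condition factors, with one factor being exactly $w_1-(a_1+b_1+c_1)$ and the other being the expression $a_1y_1-a_1z_1-b_1x_1+b_1z_1+c_1x_1-c_1y_1$ (or a power/multiple of it). Concretely, from the three pairs of equations $d_2+a_3=d_1a_1^2$, $a_3+x_2=a_1^2x_1$ and their $b,c$ analogues, subtracting in pairs already produces linear relations of the shape $a_1^2(x_1-?)$ whose combination collapses to that alternating-sum expression; I expect the symmetric function $a_1+b_1+c_1$ to emerge from the ``$w$-side'' half of the system in the same way $s_2'-t_2'$ and its relatives appeared in Section~3.2.

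The main obstacle I anticipate is bookkeeping rather than conceptual: organizing the elimination so that the final polynomial genuinely splits into the advertised two factors, and making sure every extraneous factor picked up along the way (powers of $a_1\pm w_1$, $d_1-a_1$, differences of the $d_i$, and so on) is provably nonzero via Lemmas~\ref{lemma3},~\ref{lemma1} and~\ref{lemma4}, or else can be handled as a degenerate sub-case. As in the previous subsections, this is the kind of computation best verified by a MAGMA program, so I would set up the six (or nine) defining polynomials, run the sequence of resultants $R(\cdot,\cdot,x_1)$, $R(\cdot,\cdot,y_1)$, $R(\cdot,\cdot,z_1)$, then eliminate $w_2,w_3$, and read off the factorization of the output; the two displayed alternatives in the statement are precisely the two irreducible factors that survive. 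Once the factorization $\bigl(w_1-(a_1+b_1+c_1)\bigr)\cdot\bigl(a_1y_1-a_1z_1-b_1x_1+b_1z_1+c_1x_1-c_1y_1\bigr)=0$ is established (up to nonzero factors), the lemma follows immediately, since a product over a field vanishes only if one of the factors does.
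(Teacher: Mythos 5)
Your plan is internally inconsistent at its endpoint: you propose to eliminate $x_1,y_1,z_1$ by successive resultants and then read off a factorization one of whose factors is $a_1y_1-a_1z_1-b_1x_1+b_1z_1+c_1x_1-c_1y_1$ --- but a polynomial obtained by eliminating $x_1,y_1,z_1$ cannot contain those variables. The disjunction in the statement does not come from a single eliminant splitting into the two advertised factors; in the paper it comes from a two-stage case analysis, and the second alternative is a relation that still involves $x_1,y_1,z_1$, so those variables must be kept throughout. You also never say how $d_2,d_3$ are to be removed, and the heuristic that ``subtracting in pairs produces linear relations whose combination collapses to the alternating-sum expression'' is not substantiated; that expression does not arise from simple subtraction of the adjacency equations.

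The idea you are missing is the paper's use of the hexagon substructure together with the $d\leftrightarrow w$ symmetry of the theta graph. For the $6$-cycle $da,ax,xw,wy,yb,bd$ one combines the six ``left'' adjacency equations with alternating signs so that \emph{all} second and third coordinates (including $d_2,d_3,w_2,w_3$) cancel simultaneously, leaving a single polynomial $f_1$ in first coordinates only; similarly $f_2$ from the ``right'' equations. A resultant in $b_1$ then yields, after discarding the factors $(a_1-w_1)(x_1-y_1)(d_1-y_1)(d_1-x_1)$ that Lemma~\ref{lemma1} makes nonzero, one genuine relation per hexagon. Comparing the hexagons through $y$ and through $z$ gives $\frac{d_1+x_1}{w_1+a_1}=\frac{y_1+z_1}{w_1-a_1}$, and the symmetry exchanging $d$ with $w$ gives the companion identity $\frac{w_1+a_1}{d_1+x_1}=\frac{b_1+c_1}{d_1-x_1}$; combining them produces the three symmetric relations $g_1,g_2,g_3$. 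Eliminating $d_1$ from these exposes the common factor $(a_1+b_1+c_1-w_1)$, and only if that factor is nonzero does a further resultant force $(y_1+z_1)(a_1y_1-a_1z_1-b_1x_1+b_1z_1+c_1x_1-c_1y_1)=0$, with $y_1+z_1\ne0$ supplied by Lemma~\ref{lemma3}. Without the telescoping step and this symmetry there is no evidence that your proposed elimination would terminate, let alone produce the stated conclusion.
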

\begin{proof}
We first consider the edges $da,ax,xw,wy,yb,bd$, which form a hexagon. By the definition of $G_{p}$, we have
\begin{align}
&d_{3}+a_{2}=d_{1}^{2}a_{1},&&a_{3}+d_{2}=a_{1}^{2}d_{1},\label{eq7}\\
&a_{2}+x_{3}=a_{1}x_{1}^{2},&&x_{2}+a_{3}=x_{1}a_{1}^{2},\label{eq8}\\
&x_{3}+w_{2}=x_{1}^{2}w_{1},&&w_{3}+x_{2}=w_{1}^{2}x_{1},\label{eq9}\\
&w_{2}+y_{3}=w_{1}y_{1}^{2},&&y_{2}+w_{3}=y_{1}w_{1}^{2},\label{eq10}\\
&y_{3}+b_{2}=y_{1}^{2}b_{1},&&b_{3}+y_{2}=b_{1}^{2}y_{1},\label{eq11}\\
&b_{2}+d_{3}=b_{1}d_{1}^{2},&&d_{2}+b_{3}=d_{1}b_{1}^{2}.\label{eq12}
\end{align}
Then we can compute to get that
\begin{align*}
&f_{1}:=d_{1}^{2}a_{1}-a_{1}x_{1}^{2}+x_{1}^{2}w_{1}-w_{1}y_{1}^{2}+y_{1}^{2}b_{1}-b_{1}d_{1}^{2}=0,\\
&f_{2}:=d_{1}a_{1}^{2}-a_{1}^{2}x_{1}+x_{1}w_{1}^{2}-w_{1}^{2}y_{1}+y_{1}b_{1}^{2}-b_{1}^{2}d_{1}=0,
\end{align*}
where $f_{1}$ is from the left six equations of (\ref{eq7})-(\ref{eq12}) and $f_{2}$ is from the right six equations of (\ref{eq7})-(\ref{eq12}). Regarding $f_{1},f_{2}$ as polynomials with variables $a_{1},u_{1},v_{1},b_{1},x_{1},w_{1}$, we can compute to get that
\begin{align*}
R(f_{1},f_{2},b_{1})=&(a_{1}-w_{1})(x_{1}-y_{1})(d_{1}-y_{1})(d_{1}-x_{1})(d_{1}^{2}a_{1}+d_{1}^{2}w_{1}-d_{1}a_{1}x_{1}-d_{1}a_{1}y_{1}+\\
&d_{1}x_{1}w_{1}+d_{1}w_{1}y_{1}-a_{1}x_{1}^{2}-a_{1}x_{1}y_{1}-a_{1}y_{1}^{2}+x_{1}^{2}w_{1}+x_{1}w_{1}y_{1}-w_{1}y_{1}^{2}).
\end{align*}
By Lemma~\ref{lemma1} and $f_{1}=f_{2}=0$, we have
 \begin{align*}
 &d_{1}^{2}a_{1}+d_{1}^{2}w_{1}-d_{1}a_{1}x_{1}-d_{1}a_{1}y_{1}+d_{1}x_{1}w_{1}+d_{1}w_{1}y_{1}-a_{1}x_{1}^{2}-a_{1}x_{1}y_{1}-\\
 &a_{1}y_{1}^{2}+x_{1}^{2}w_{1}+x_{1}w_{1}y_{1}-w_{1}y_{1}^{2}=0.
 \end{align*}
 Similarly, the edges $da,ax,xw,wz,zc,cd$ form a hexagon, we have
 \begin{align*}
 &d_{1}^{2}a_{1}+d_{1}^{2}w_{1}-d_{1}a_{1}x_{1}-d_{1}a_{1}z_{1}+d_{1}x_{1}w_{1}+d_{1}w_{1}z_{1}-a_{1}x_{1}^{2}-a_{1}x_{1}z_{1}-\\
 &a_{1}z_{1}^{2}+x_{1}^{2}w_{1}+x_{1}w_{1}z_{1}-w_{1}z_{1}^{2}=0.
 \end{align*}
From the above two equations, we have
\begin{align*}
(d_{1}+x_{1})(w_{1}-a_{1})y_{1}-(w_{1}+a_{1})y_{1}^{2}=(d_{1}+x_{1})(w_{1}-a_{1})z_{1}-(w_{1}+a_{1})z_{1}^{2}.
\end{align*}
Then we have
\begin{align*}
\frac{d_{1}+x_{1}}{w_{1}+a_{1}}=\frac{y_{1}+z_{1}}{w_{1}-a_{1}}.
\end{align*}
By the symmetry of $\theta_{3,3}$, we have
\begin{align*}
\frac{w_{1}+a_{1}}{d_{1}+x_{1}}=\frac{b_{1}+c_{1}}{d_{1}-x_{1}}.
\end{align*}
From the above two equations, we can get
\begin{align*}
g_{1}=(y_{1}+z_{1})(b_{1}+c_{1})-(w_{1}-a_{1})(d_{1}-x_{1})=0.
\end{align*}
By the symmetry of $\theta_{3,3}$ again, we also have
\begin{align*}
&g_{2}=(x_{1}+z_{1})(a_{1}+c_{1})-(w_{1}-b_{1})(d_{1}-y_{1})=0,\\
&g_{3}=(x_{1}+y_{1})(a_{1}+b_{1})-(w_{1}-c_{1})(d_{1}-z_{1})=0.
\end{align*}
Now we regard $g_{i}$ ($i=1,2,3$) as polynomials with variables $d_{1},w_{1}$. By a MAGMA program, we can get that
\begin{align*}
&R(g_{1},g_{2},d_{1})=h_{1}\cdot(a_{1}+b_{1}+c_{1}-w_{1}),\\
&R(g_{1},g_{3},d_{1})=h_{2}\cdot(a_{1}+b_{1}+c_{1}-w_{1}).
\end{align*}
If $a_{1}+b_{1}+c_{1}-w_{1}\ne0$, then we can compute to get that $R(h_{1},h_{2},w_{1})=(y_{1}+z_{1})(a_{1}y_{1}-a_{1}z_{1}-b_{1}x_{1}+b_{1}z_{1}+c_{1}x_{1}-c_{1}y_{1})=0$. Hence $a_{1}y_{1}-a_{1}z_{1}-b_{1}x_{1}+b_{1}z_{1}+c_{1}x_{1}-c_{1}y_{1}=0$.
\end{proof}

\begin{remark}\label{rmk1}
It is easy to see that if $d_{1}\in(\mathbb{F}_{p}^{*}\backslash S)$, then Lemma~\ref{lemma2} still holds.
\end{remark}

\begin{lemma}\label{lemma5}
If $w_{1}=a_{1}+b_{1}+c_{1}$, then there are at most $2$ different sequences of $(x,y,z,w)$.
\end{lemma}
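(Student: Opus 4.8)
\textbf{Proof proposal for Lemma~\ref{lemma5}.}
The plan is to substitute the hypothesis $w_1 = a_1+b_1+c_1$ directly into the system \eqref{eq1}--\eqref{eq6} and count the remaining free parameters. Recall that once $w_1$ and $w_3$ (say) are fixed, all of $w_2, x_1, y_1, z_1$ are forced, and then the remaining coordinates $x_2,x_3,y_2,y_3,z_2,z_3$ are uniquely determined from the edge equations; hence it suffices to bound the number of admissible pairs $(w_1,w_3)$, and since $w_1$ is now fixed by hypothesis, it suffices to show that $w_3$ takes at most two values (and that for each value the rest of the tuple is determined, which is routine given Lemma~\ref{lemma1}).

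First I would revisit the resultant chain already built in the paper. With $a_1\ne b_1$, $a_1\ne c_1$, $b_1\ne c_1$ (we are in the third subsection), the polynomials $g_1,g_2,g_3$ obtained from $R(f_1,f_2,x_1)$, $R(f_3,f_4,y_1)$, $R(f_5,f_6,z_1)$ after removing the nonzero factors $(a_1-w_1),(b_1-w_1),(c_1-w_1)$ are still available. The natural step is to run a MAGMA computation that evaluates $g_4 = R(g_1,g_2,w_2)$ and $g_5 = R(g_1,g_3,w_2)$, then substitutes $w_1 = a_1+b_1+c_1$, and examines the resulting two polynomials in $w_3$. I expect that after this substitution both $g_4$ and $g_5$ become quadratic (or lower) in $w_3$, and that their leading coefficients, while possibly sharing a common vanishing locus, can be combined: taking an appropriate $\mathbb{F}_p$-linear combination or a further resultant $R(g_4,g_5,w_3)$ (or the resultant of their reductions) should either vanish identically — in which case $g_4$ and $g_5$ have a common factor in $w_3$ that still has degree at most $2$ — or be a nonzero constant, forcing at most two common roots $w_3$. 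In the degenerate identically-vanishing case, one falls back on a single one of $g_4, g_5$, whose degree in $w_3$ is at most $2$ once we check its leading coefficient is nonzero using $a_1\ne b_1$, $a_1\ne c_1$ and Lemma~\ref{lemma3}.

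The key point to verify carefully is that the leading coefficient in $w_3$ of whichever polynomial we finally use does not vanish under the standing assumptions of the third subsection. Here I would lean on exactly the nonvanishing facts already isolated in Subsection~3.2: leading coefficients factor through quantities like $(a_1-b_1)$, $(a_1-c_1)$, $(b_1-c_1)$, $(a_1+b_1+c_1+w_1)$ and $a_2-b_2$, $a_2-c_2$, $b_2-c_2$, all of which are nonzero here by hypothesis together with Lemmas~\ref{lemma3} and~\ref{lemma1}. Note in particular that $w_1 = a_1+b_1+c_1$ makes $a_1+b_1+c_1+w_1 = 2w_1$, and $2 \ne 0$ in $\mathbb{F}_p$ since $p$ is an odd prime, so any factor of that shape is harmless. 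Once the degree-$2$ bound on $w_3$ is established, the uniqueness of the rest of the tuple $(x,y,z,w)$ given $(w_1,w_3)$ follows as in the previous subsections: $w_2$ is determined by $g_1=0$ (linear in $w_2$ after fixing $w_1,w_3$, with nonzero leading coefficient since $a_1\ne w_1$ by Lemma~\ref{lemma1}), then $x_1,y_1,z_1$ solve \eqref{eq1}--\eqref{eq6} — actually $x_1$ is a root of $f_1$, but $f_1$ is degree $2$ in $x_1$; here one uses that the paired equation and Lemma~\ref{lemma1} pin down the sign, or more simply that $x_1\in S$ excludes one of the two roots by Lemma~\ref{lemma3}.

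The main obstacle I anticipate is precisely this last issue of going from $x_1^2$ to $x_1$: the substitution $w_1 = a_1+b_1+c_1$ might not by itself force $x_1$ uniquely from $f_1$, so the count of two could inflate. The cleanest remedy is to recall the observation used repeatedly already — that $x_1,y_1,z_1 \in S \subseteq [1,(p-5)/6]$, so of the two square roots of any prescribed value at most one lies in $S$ (as $x_1$ and $-x_1$ cannot both be in $S$ by Lemma~\ref{lemma3}) — so the pair $(w_1,w_3)$ really does determine $(x,y,z,w)$ uniquely, and the bound of $2$ stands. If instead the MAGMA reduction shows $g_4$ or $g_5$ drops to degree $1$ or $0$ in $w_3$ after substitution, the bound only improves, so the stated "at most $2$" is safe in all branches.
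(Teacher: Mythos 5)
Your proposal follows essentially the same route as the paper's proof: substitute $w_{1}=a_{1}+b_{1}+c_{1}$, run the resultant chain down to the two quadratics $g_{4},g_{5}$ in $w_{3}$, and verify that at least one leading coefficient is nonzero --- the paper does this via $R(s_{2}',t_{2}',c_{1})=(a_{1}-b_{1})(a_{1}+b_{1})(a_{1}^{2}+a_{1}b_{1}+b_{1}^{2})\ne0$, while you use $s_{2}'-t_{2}'=(b_{1}-c_{1})(a_{1}+b_{1}+c_{1}+w_{1})=2(b_{1}-c_{1})w_{1}\ne0$, both resting on Lemma~\ref{lemma3} and the standing hypotheses of the subsection. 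The square-root ambiguity you flag is harmless for the reason you give (at most one of $\pm x_{1}$ lies in $S$), or more directly because $f_{2},f_{4},f_{6}$ are linear in $x_{1},y_{1},z_{1}$ with nonzero leading coefficients, so the bound of $2$ stands.
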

\begin{proof}
Substituting $w_{1}=a_{1}+b_{1}+c_{1}$ into Equations (\ref{eq1})-(\ref{eq6}), we have
\begin{align*}
&f_{1}:=a_{2}-w_{2}+x_{1}^{2}(b_{1}+c_{1})=0,\\
&f_{2}:=a_{3}-w_{3}-x_{1}(a_{1}^{2}-(a_{1}+b_{1}+c_{1})^{2})=0,\\
&f_{3}:=b_{2}-w_{2}+y_{1}^{2}(a_{1}+c_{1})=0,\\
&f_{4}:=b_{3}-w_{3}-y_{1}(b_{1}^{2}-(a_{1}+b_{1}+c_{1})^{2})=0,\\
&f_{5}:=c_{2}-w_{2}+z_{1}^{2}(a_{1}+b_{1})=0,\\
&f_{6}:=c_{3}-w_{3}-z_{1}(c_{1}^{2}-(a_{1}+b_{1}+c_{1})^{2})=0.
\end{align*}
Now we regard $f_{i}$ ($i=1,2,\dots,6$) as polynomials with variables $x_{1},y_{1},z_{1},w_{2},w_{3}$. By a MAGMA program, we can get that
\begin{align*}
&R(f_{1},f_{2},x_{1})=g_{1}\cdot(b_{1}+c_{1}),\\
&R(f_{3},f_{4},y_{1})=g_{2}\cdot(a_{1}+c_{1}),\\
&R(f_{5},f_{6},z_{1})=g_{3}\cdot(a_{1}+b_{1}),\\
&R(g_{1},g_{2},w_{2})=g_{4},\\
&R(g_{1},g_{3},w_{2})=g_{5},
\end{align*}
where $g_{4}$ and $g_{5}$ are polynomials of $w_{3}$ with degree $2$. We write $g_{4}$ and $g_{5}$ as $g_{4}=\sum_{i=0}^{2}s_{i}w_{3}^{i}$ and $g_{5}=\sum_{i=0}^{2}t_{i}w_{3}^{i}$, then $s_{2}=s_{2}'\cdot(a_{1}-b_{1})$ and $t_{2}=t_{2}'\cdot(a_{1}-c_{1})$. Now we regard $s_{2}',t_{2}'$ as polynomials with variable $c_{1}$, then $R(s_{2}',t_{2}',c_{1})=(a_{1}-b_{1})(a_{1}+b_{1})(a_{1}^{2}+a_{1}b_{1}+b_{1}^{2})\ne0$. Hence there is at least one of $s_{2},t_{2}$ not being 0, then there are at most $2$ solutions for $w_{3}$. If $w_{1}$ and $w_{3}$ are given, then all the remaining variables are uniquely determined. Hence there are at most $2$ different sequences of $(x,y,z,w)$ for this case.
\end{proof}

By Lemma~\ref{lemma1}, if $d,a,b,c,x,y,z,w$ form a $\theta_{3,3}$ with edge set $\{da,db,dc,ax,by,cz,wx,wy,wz\}$, then $a_{3}\ne b_{3}$.
\begin{lemma}\label{lemma6}
If $a_{1}y_{1}-a_{1}z_{1}-b_{1}x_{1}+b_{1}z_{1}+c_{1}x_{1}-c_{1}y_{1}=0$ and $a_{3}\ne b_{3}$, then there are at most $27$ different sequences of $(x,y,z,w)$.
\end{lemma}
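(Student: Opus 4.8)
\medskip
\noindent\textbf{Proof plan.}
The plan is to rewrite the quantity to be bounded as the number of solutions of an explicit polynomial system and then eliminate all variables but one, exactly in the style of the previous two subsections. Fix $a,b,c$ as in the hypotheses, so $a_{1},b_{1},c_{1}$ are pairwise distinct, $a_{2},b_{2},c_{2}$ are pairwise distinct and $a_{3}\neq b_{3}$; by Lemma~\ref{lemma1} also $a_{1},b_{1},c_{1}\neq w_{1}$, and by Lemma~\ref{lemma3} $a_{1}+w_{1},b_{1}+w_{1},c_{1}+w_{1}\neq 0$, so that $a_{1}^{2}-w_{1}^{2}$, $b_{1}^{2}-w_{1}^{2}$ and $c_{1}^{2}-w_{1}^{2}$ are all nonzero. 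A sequence $(x,y,z,w)$ as in the statement is precisely a solution, in the six unknowns $x_{1},y_{1},z_{1},w_{1},w_{2},w_{3}$, of Equations~\eqref{eq1}--\eqref{eq6} together with
\[
L:=a_{1}y_{1}-a_{1}z_{1}-b_{1}x_{1}+b_{1}z_{1}+c_{1}x_{1}-c_{1}y_{1}=0 ;
\]
given such a solution the coordinates $x_{2},x_{3},y_{2},y_{3},z_{2},z_{3}$ are determined uniquely from the defining relations of $G_{p}$, and distinct $w$ give distinct sequences. So it suffices to bound the number of solutions of this system.

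Since $f_{2},f_{4},f_{6}$ are linear in $x_{1},y_{1},z_{1}$ with nonzero leading coefficients, we may solve $x_{1}=\dfrac{a_{3}-w_{3}}{a_{1}^{2}-w_{1}^{2}}$, $y_{1}=\dfrac{b_{3}-w_{3}}{b_{1}^{2}-w_{1}^{2}}$, $z_{1}=\dfrac{c_{3}-w_{3}}{c_{1}^{2}-w_{1}^{2}}$. Passing to the resultants $g_{1}=R(f_{1},f_{2},x_{1})/(a_{1}-w_{1})$, $g_{2}=R(f_{3},f_{4},y_{1})/(b_{1}-w_{1})$, $g_{3}=R(f_{5},f_{6},z_{1})/(c_{1}-w_{1})$ (as before) gives polynomials in $\mathbb{F}_{p}[w_{1},w_{2},w_{3}]$, each linear in $w_{2}$ with nonzero leading coefficient and quadratic in $w_{3}$ --- for instance $g_{1}=(a_{2}-w_{2})(a_{1}-w_{1})(a_{1}+w_{1})^{2}-(a_{3}-w_{3})^{2}$ --- while substituting the above formulas for $x_{1},y_{1},z_{1}$ into $L$ and clearing denominators yields a polynomial $\widetilde{L}=A(w_{1})-B(w_{1})w_{3}$, linear in $w_{3}$, whose $w_{3}$-coefficient $B$ has degree at most $2$ and satisfies
\[
B(a_{1})=(c_{1}-b_{1})(b_{1}-a_{1})(b_{1}+a_{1})(c_{1}-a_{1})(c_{1}+a_{1})\neq 0
\]
by the distinctness of $a_{1},b_{1},c_{1}$ and Lemma~\ref{lemma3}; in particular $B\not\equiv 0$. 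I would then eliminate $w_{2}$ by forming $g_{4}=R(g_{1},g_{2},w_{2})$ and $g_{5}=R(g_{1},g_{3},w_{2})$, and eliminate $w_{3}$ against the linear-in-$w_{3}$ polynomial $\widetilde{L}$ (for instance via $R(g_{4},\widetilde{L},w_{3})$, with $R(g_{5},\widetilde{L},w_{3})$ in reserve), obtaining after removing the harmless nonzero factors among the powers of $a_{1}\pm w_{1},b_{1}\pm w_{1},c_{1}\pm w_{1}$ a univariate polynomial $\bar{h}(w_{1})$ whose degree is bounded by a MAGMA computation. Combined with the case analysis below, this produces the constant $27$.

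It remains to check that $\bar{h}$ is not the zero polynomial, and this is exactly where the hypothesis $a_{3}\neq b_{3}$ enters --- together with the distinctness of $a_{1},b_{1},c_{1}$ and of $a_{2},b_{2},c_{2}$ and Lemma~\ref{lemma3}: the MAGMA computation should exhibit the leading coefficient of $\bar{h}$ (or its value at a convenient specialization of one of the parameters $a_{i},b_{i},c_{i}$) as a product of manifestly nonzero quantities having $a_{3}-b_{3}$ among its factors, so that $\bar{h}\not\equiv 0$ under the hypotheses; should this fail for $R(g_{4},\widetilde{L},w_{3})$ one uses $R(g_{5},\widetilde{L},w_{3})$ instead, and finiteness of the subgraph count guarantees that some such eliminant is nonzero. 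Granting $\bar{h}\not\equiv 0$, its roots give boundedly many candidate values of $w_{1}$; for each $w_{1}$ with $B(w_{1})\neq 0$ the equation $\widetilde{L}=0$ fixes $w_{3}$, then $g_{1}=0$ fixes $w_{2}$, and the displayed formulas together with the defining relations of $G_{p}$ then fix $x,y,z$, so at most one sequence arises; the at most two values of $w_{1}$ with $B(w_{1})=0$ are disposed of by a short direct argument with $g_{1},g_{2},g_{3}$ (each linear in $w_{2}$, quadratic in $w_{3}$). Tallying everything, the MAGMA computation yields the bound $27$. The main obstacle is precisely the non-vanishing $\bar{h}\not\equiv 0$: the elimination itself and the degree bookkeeping are mechanical, but isolating $a_{3}-b_{3}$ as a factor of the relevant leading coefficient --- the step that makes the hypothesis $a_{3}\neq b_{3}$ do its work --- is the delicate point, with the degenerate case $B(w_{1})=0$ a minor secondary concern.
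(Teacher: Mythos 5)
Your plan sets up the right system (Equations~\eqref{eq1}--\eqref{eq6} plus the linear relation $L=f_{7}$), but it has a genuine gap at exactly the point you yourself flag as delicate: the non-vanishing of the final eliminant. You propose to eliminate down to a single polynomial $\bar h(w_{1})$ via $R(g_{4},\widetilde L,w_{3})$ and to justify $\bar h\not\equiv 0$ by saying the computation ``should exhibit'' a leading coefficient with $a_{3}-b_{3}$ as a factor, falling back on ``finiteness of the subgraph count guarantees that some such eliminant is nonzero.'' That fallback is circular and, over $\mathbb{F}_{p}$, vacuous: everything is finite over a finite field, and if every eliminant you form vanishes identically the solution set can have size $\Theta(p)$, which is precisely the failure mode the lemma must rule out. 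Indeed, the surrounding subsection shows this danger is real --- the analogous $w_{1}$-eliminant $h=\sum h_{i}w_{1}^{i}$ \emph{does} vanish identically in the degenerate case, which is the whole reason Lemma~\ref{lemma6} exists. Without actually producing the coefficient that is a product of manifestly nonzero factors times $a_{3}-b_{3}$, the proof is not complete; nor is the constant $27$ derived rather than asserted (your count is ``one sequence per root of $\bar h$,'' so you would need $\deg\bar h\le 27$ plus a treatment of the $\le 2$ roots of $B$, none of which is established).

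For contrast, the paper eliminates in a different order and does exhibit the required non-vanishing explicitly. It forms $g_{1}=f_{1}-f_{3}$, $g_{2}=f_{1}-f_{5}$, $g_{3}=f_{2}-f_{4}$ (killing $w_{2}$ and $w_{3}$), takes resultants in $w_{1}$ and then uses $f_{7}$ to eliminate $x_{1}$, ending with $R(h_{1},h_{2},z_{1})$ whose essential factor is a degree-$8$ polynomial $s$ in $y_{1}$. The point of the hypothesis $a_{3}\ne b_{3}$ is that the coefficient $s_{7}$ carries the factor $a_{3}-b_{3}$, and $R(s_{7}',s_{8}',a_{1})=(b_{1}+c_{1})(b_{1}+5c_{1})(b_{1}^{2}+b_{1}c_{1}+c_{1}^{2})\ne 0$ by Lemma~\ref{lemma3} (this is where the otherwise mysterious condition $x+5y\ne 0$ is used), so $s_{7}$ and $s_{8}$ cannot both vanish and $s\not\equiv 0$. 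This gives at most $8+1=9$ values of $y_{1}$ (the extra $1$ from the factor $a_{1}y_{1}^{2}-b_{1}y_{1}^{2}-a_{2}+b_{2}$), then at most $3$ values of $z_{1}$ since $h_{1}$ is cubic in $z_{1}$ with leading coefficient $(a_{1}-c_{1})(a_{1}-b_{1})^{2}\ne 0$, whence $9\times 3=27$. Your alternative elimination order may well work, but to complete it you must carry out the computation and isolate an explicitly nonzero coefficient containing $a_{3}-b_{3}$, exactly as the paper does; the appeal to finiteness cannot substitute for that step.
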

\begin{proof}
Let $f_{7}=a_{1}y_{1}-a_{1}z_{1}-b_{1}x_{1}+b_{1}z_{1}+c_{1}x_{1}-c_{1}y_{1}=0$, then we regard $f_{7}$ and $f_{i}$ ($i=1,2,\dots,6$) in Equations (\ref{eq1})-(\ref{eq6}) as polynomials with variables $x_{1},y_{1},z_{1},w_{1},w_{2},w_{3}$. Let $g_{1}=f_{1}-f_{3}$, $g_{2}=f_{1}-f_{5}$ and $g_{3}=f_{2}-f_{4}$. By a MAGMA program, we can get that
\begin{align*}
&R(g_{1},g_{2},w_{1})=g_{5},\\
&R(g_{1},g_{3},w_{1})=g_{6}\cdot(x_{1}-y_{1}),\\
&R(f_{7},g_{5},x_{1})=h_{1}\cdot(y_{1}-z_{1}),\\
&R(f_{7},g_{6},x_{1})=h_{2},\\
&R(h_{1},h_{2},z)=(b_{1}-c_{1})^{3}(a_{1}-b_{1})^{6}(a_{1}y_{1}^{2}-b_{1}y_{1}^{2}-a_{2}+b_{2})^{2}\cdot s,
\end{align*}
where $s$ is a polynomial of $y_{1}$ with degree $8$. Since at most one of $u,-u$ belongs to $S$, then $a_{1}y_{1}^{2}-b_{1}y_{1}^{2}-a_{2}+b_{2}=0$ has at most 1 solution for $y_{1}$. We write $s=\sum_{i=0}^{8}s_{i}y_{1}^{i}$, then we can compute to get that $s_{8}=s_{8}'\cdot(b_{1}-c_{1})^{3}(a_{1}-c_{1})^{4}(a_{1}-b_{1})^{4}$ and $s_{7}=s_{7}'\cdot(b_{1}-c_{1})^{3}(a_{3}-b_{3})(a_{1}-c_{1})^{3}(a_{1}-b_{1})^{3}$. We regard $s_{8}'$ and $s_{7}'$ as polynomials of $a_{1}$, then $R(s_{7}',s_{8}',a_{1})=(b_{1}+c_{1})(b_{1}+5c_{1})(b_{1}^{2}+b_{1}c_{1}+c_{1}^{2})\ne0$. Hence there are at most $9\ (=8+1)$ solutions for $y_{1}$ (note that we have proved $a_{1}y_{1}^{2}-b_{1}y_{1}^{2}-a_{2}+b_{2}=0$ has at most 1 solution for $y_{1}$ previously). For any given $y_{1}$, $h_{1}$ is a polynomial of $z_{1}$ with degree 3. We write $h_{1}$ as $h_{1}=\sum_{i=0}^{3}s_{i}z_{1}^{i}$, then $s_{3}=(a_{1}-c_{1})(a_{1}-b_{1})^{2}\ne0$. Hence, there are at most 3 solutions for $z_{1}$. If $y_{1}$ and $z_{1}$ are given, then all the remaining variables are uniquely determined. Hence there are at most $27$ different sequences of $(x,y,z,w)$ for this case.
\end{proof}

Now we regard $f_{i}$ ($i=1,2,\dots,6$) in Equations (\ref{eq1})-(\ref{eq6}) as polynomials with variables $x_{1},y_{1},z_{1},w_{1},w_{2},w_{3}$. By a MAGMA program, we can get that
\begin{align*}
&R(f_{1},f_{2},x_{1})=g_{1}\cdot(a_{1}-w_{1}),\\
&R(f_{3},f_{4},y_{1})=g_{2}\cdot(b_{1}-w_{1}),\\
&R(f_{5},f_{6},z_{1})=g_{3}\cdot(c_{1}-w_{1}).
\end{align*}
By Lemma~\ref{lemma1}, we have $a_{1}\ne w_{1}$ and $c_{1}\ne w_{1}$. Then we can compute to get that
\begin{align*}
&R(g_{1},g_{2},w_{2})=g_{4},\\
&R(g_{1},g_{3},w_{2})=g_{5},\\
&R(g_{4},g_{5},w_{3})=h\cdot (a_{1}-w_{1})^{2}(a_{1}+w_{1})^{4},
\end{align*}
where $h$ is a polynomial of $w_{1}$ with degree $10$. By Lemma~\ref{lemma3}, $a_{1}+w_{1}\ne0$. We can write $h$ as $h=\sum_{i=0}^{10}h_{i}w_{1}^{i}$.

 If at least one of $h_{i}$ is not 0, then there are at most 10 solutions for $w_{1}$. For any fixed $w_{1}$, $g_{4}$ and $g_{5}$ are polynomials of $w_{3}$ with degree 2. We write $g_{4}$ and $g_{5}$ as $g_{4}=\sum_{i=0}^{2}s_{i}w_{3}^{i}$ and $g_{5}=\sum_{i=0}^{2}t_{i}w_{3}^{i}$, then $s_{2}=s_{2}'\cdot(a_{1}-b_{1})$ and $t_{2}=t_{2}'\cdot(a_{1}-c_{1})$. We can compute to get that $s_{2}'-t_{2}'=(b_{1}-c_{1})(a_{1}+b_{1}+c_{1}+w_{1})$. By Lemmas~\ref{lemma3} and \ref{lemma1}, we have $s_{2}'-t_{2}'\ne0$. Hence there is at least one of $s_{2},t_{2}$ not being 0, then there are at most $2$ solutions for $w_{3}$. If $w_{1}$ and $w_{3}$ are given, then all the remaining variables are uniquely determined. Hence there are at most $20$ different sequences of $(x,y,z,w)$ for this case.

If $h_{i}=0$ for $0\le i\le10$. Now we regard $h_{i}$ as polynomials with variables $b_{2},c_{2},c_{3}$.  We can compute to get that
\begin{align*}
h_{10}=(a_{1}b_{2}-a_{1}c_{2}-a_{2}b_{1}+a_{2}c_{1}+b_{1}c_{2}-b_{2}c_{1})^2.
\end{align*}
Let
$h_{10}'=a_{1}b_{2}-a_{1}c_{2}-a_{2}b_{1}+a_{2}c_{1}+b_{1}c_{2}-b_{2}c_{1}.$ By a MAGMA program, we can get that
\begin{align*}
&R(h_{10}',h_{8},c_{2})=(a_{2}-b_{2})(a_{1}-b_{1})\cdot k_{1},\\
&R(h_{10}',h_{6},c_{2})=(a_{2}-b_{2})(a_{1}-b_{1})\cdot k_{2},\\
&R(k_{1},k_{2},c_{3})=(b_{1}-c_{1})^{4}(a_{1}-c_{1})^{4}(a_{1}-b_{1})^{4}\cdot r_{1}^{2},\\
&R(k_{1},k_{2},b_{2})=(b_{1}-c_{1})^{2}(a_{1}-c_{1})^{2}(a_{1}-b_{1})\cdot r_{2}^{2},
\end{align*}
where
\begin{align}
&r_{1}=a_{1}^{3}a_{2}-a_{1}^{3}b_{2}+a_{1}^{2}a_{2}b_{1}-a_{1}^{2}b_{1}b_{2}-a_{1}a_{2}b_{1}^{2}+a_{1}b_{1}^{2}b_{2}-a_{2}b_{1}^{3}-a_{3}^{2}+2a_{3}b_{3}+b_{1}^{3}b_{2}-b_{3}^{2},\label{eq13}\\
&r_{2}=a_{1}^{2}b_{3}-a_{1}^{2}c_{3}-a_{3}b_{1}^{2}+a_{3}c_{1}^{2}+b_{1}^{2}c_{3}-b_{3}c_{1}^{2}.\label{eq14}
\end{align}

We can also compute to get that
\begin{align*}
&R(h_{10}',h_{8},b_{2})=(a_{2}-c_{2})(a_{1}-c_{1})\cdot k_{3},\\
&R(h_{10}',h_{6},b_{2})=(a_{2}-c_{2})(a_{1}-c_{1})\cdot k_{4},\\
&R(k_{3},k_{4},c_{3})=(b_{1}-c_{1})^{4}(a_{1}-c_{1})^{2}(a_{1}-b_{1})^{4}\cdot r_{3}^{2},
\end{align*}
where
\begin{align}
r_{3}=&a_{1}^{4}a_{2}-a_{1}^{4}c_{2}-2a_{1}^{2}a_{2}b_{1}^{2}+2a_{1}^{2}b_{1}^{2}c_{2}-a_{1}a_{3}^{2}+2a_{1}a_{3}b_{3}-a_{1}b_{3}^{2}+a_{2}b_{1}^{4}+a_{3}^{2}c_{1}-\notag\\
&2a_{3}b_{3}c_{1}-b_{1}^{4}c_{2}+b_{3}^{2}c_{1}.\label{eq15}
\end{align}

Now we define $d_{1}:=\frac{a_{3}-b_{3}}{a_{1}^{2}-b_{1}^{2}}$, $d_{2}:=a_{1}^{2}(\frac{a_{3}-b_{3}}{a_{1}^{2}-b_{1}^{2}})-a_{3}$, and $d_{3}:=a_{1}(\frac{a_{3}-b_{3}}{a_{1}^{2}-b_{1}^{2}})^{2}-a_{2}$. Then by $r_{1}=r_{2}=r_{3}=0$ (see Equations (\ref{eq13}), (\ref{eq14}) and (\ref{eq15})), it is easy to check that
\begin{align*}
&a_{2}+d_{3}=a_{1}d_{1}^{2}, && a_{3}+d_{2}=a_{1}^{2}d_{1},\\
&b_{2}+d_{3}=b_{1}d_{1}^{2}, && b_{3}+d_{2}=b_{1}^{2}d_{1},\\
&c_{2}+d_{3}=c_{1}d_{1}^{2}, && c_{3}+d_{2}=c_{1}^{2}d_{1}.
\end{align*}

Then the vertex $d=(d_{1},d_{2},d_{3})$ is a common neighbor of $a,b,c$. Hence the vertex $d,a,b,c,x,\\y,z,w$ form a $\theta_{3,3}$, by Lemmas~\ref{lemma2}, \ref{lemma5} and \ref{lemma6}, there are at most $29$ different sequences of $(x,y,z,w)$ for this case.

\begin{remark}
Note that $d_{1}$ may not be in the set $S$, and then $d\not\in V$, but by Remark~\ref{rmk1}, we still have the same result. Actually, we do not need the notation $\theta_{3,3}$. If the point $d=(d_{1},d_{2},d_{3})$ satisfies the above equations with $a,b,c$, then we have the statements of Lemma~\ref{lemma2}.
\end{remark}

Therefore, if $a_{1}\ne b_{1}$, $a_{1}\ne c_{1}$, $b_{1}\ne c_{1}$, $a_{2}\ne b_{2}$, $a_{2}\ne c_{2}$ and $b_{2}\ne c_{2}$, then there are at most $29$ different sequences of $(x,y,z,w)$.
\subsection{Proof of Theorem~\ref{thm}}
From the previous discussions, for any given $a,b,c\in V$, there are at most $29$ different sequences of $(x,y,z,w)$ such that $ax,xw,by,yw,cz,zw$ are edges in $G_{p}$. Hence $G_{p}$ is $K_{3,30}'$-free.

\section{Conclusion and remarks}\label{conclusion}
In this paper, we study the Tur\'{a}n number of 1-subdivision of $K_{3,t}$. More precisely, we provide an explicit construction giving
\begin{align*}
\text{ex}(n,K_{3,30}')=\Omega(n^{\frac{4}{3}}),
\end{align*}
which makes progress on the known estimation for the smallest value of $t$ concerning a problem posed by Conlon, Janzer and Lee \cite{CJL19}. It would be interesting to consider the Conlon-Janzer-Lee problem for $s\ge4$.

\end{document}